\documentclass{amsart}
\usepackage[utf8]{inputenc}
\usepackage{amsmath}
\usepackage{amssymb}
\usepackage{amsthm}
\usepackage{enumitem}
\usepackage{todonotes}
\usepackage{hyperref}
\usepackage{url}
\usepackage{xcolor}

\setlist[enumerate]{label=\rm{(\arabic*)}, ref=(\arabic*)}

\DeclareMathOperator{\St}{St}

\DeclareMathOperator{\rist}{Rist}

\DeclareMathOperator{\Sym}{Sym}

\newcommand{\NN}{\mathbb{N}}
\newcommand{\ZZ}{\mathbb{Z}}

\newcommand{\X}{\mathcal{X}}

\newtheorem{Mainthm}{Theorem}

\newtheorem{Maincor}[Mainthm]{Corollary}

\newtheorem*{NbhStabsIsom}{Theorem \ref{thm:NeighbourhoodStabilisersAreIsomorphic}}

\newtheorem{thm}{Theorem}[section]
\newtheorem{lemma}[thm]{Lemma}
\newtheorem{prop}[thm]{Proposition}
\newtheorem{cor}[thm]{Corollary}

\theoremstyle{definition}

\newtheorem{rem}[thm]{Remark}

\newtheorem{example}[thm]{Example}

\title{On the stabilisers of points in groups with micro-supported actions}
\author{Dominik Francoeur}
\thanks{This work was supported by the LABEX MILYON (ANR-10-LABX-0070) of Universit\'{e} de Lyon, within the program "Investissements d'Avenir" (ANR-11-IDEX-0007) operated by the French National Research Agency (ANR)}

\begin{document}

\begin{abstract}
%Given a group $G$ acting by homeomorphism on a first-countable Hausdorff space $\X$, we prove that if the action of $G$ on $\X$ is minimal and has rigid stabilisers that act locally minimally, then the neighbourhood stabilisers of any two points in $\X$ are conjugated by a homeomorphism of $\X$. This allows us to study stabilisers of points in many classes of groups, such as topological full groups of Cantor minimal systems, Thompson groups and branch groups, a class that includes Grigorchuk's group of intermediate growth.
Given a group $G$ of homeomorphism of a first-countable Hausdorff space $\X$, we prove that if the action of $G$ on $\X$ is minimal and has rigid stabilisers that act locally minimally, then the neighbourhood stabilisers of any two points in $\X$ are conjugated by a homeomorphism of $\X$. This allows us to study stabilisers of points in many classes of groups, such as topological full groups of Cantor minimal systems, Thompson groups, branch groups, and groups acting on trees with almost prescribed local actions.
\end{abstract}

\maketitle

\section{Introduction}\label{section:Introduction}

Given an action of a group $G$ on a set $\X$, one can associate to any $x\in \X$ the subgroup $\St_G(x)\leq G$ of elements of $G$ fixing $x$. Our goal in this note is to determine when two such subgroups $\St_G(x)$ and $\St_G(y)$ are isomorphic. Of course, if $x$ and $y$ are in the same orbit, their stabilisers are isomorphic, and in general, nothing else can be said. However, if $\X$ is a topological space, and if the action of $G$ on $\X$ is by homeomorphisms and is sufficiently rich, then we will show that any two regular points have isomorphic stabilisers. Using this, we will obtain, among other things, a complete classification, up to isomorphism, of stabilisers for the first Grigorchuk group and for the topological full group of a minimal homeomorphism of the Cantor set.

Before we make these statements more precise, let us first fix some terminology and notation. Let $\X$ be a topological space, and let $G$ be a group of homeomorphisms of $\X$. For $x\in \X$, we will denote by $\St_G^0(x)$ the \emph{neighbourhood stabiliser} of $x$, that is, the subgroup of all elements of $\St_G(x)$ whose set of fixed points contains a neighbourhood of $x$. Note that throughout the entirety of this text, by "neighbourhood" we will always mean open neighbourhood. It is readily seen that $\St_G^0(x)$ is a normal subgroup of $\St_G(x)$. A point $x\in \X$ is called \emph{regular} if $\St_G^0(x)=\St_G(x)$, and \emph{singular} otherwise.

Recall that the action of $G$ on $\X$ is said to be \emph{minimal} if for every $x\in \X$, the orbit of $x$ under the action of $G$ is dense in $\X$. We will say that the action is \emph{locally minimal} if for all $x\in \X$, there exists a neighbourhood $U\subseteq \X$ of $x$ such that for all $y\in U$, the intersection of the orbit of $y$ with $U$ is dense in $U$. Equivalently, the action of $G$ on $\X$ is locally minimal if and only if $\X$ can be decomposed as a disjoint union $\X=\bigsqcup_{i}W_i$ of clopen $G$-invariant subsets $W_i\subseteq \X$ such that the action of $G$ on each $W_i$ is minimal.

For a given open set $U\subseteq \X$, we will call the \emph{rigid stabiliser of $U$ in $G$}, denoted by $\rist_G(U)$, the subgroup
\[\rist_G(U) = \left\{g\in G \mid g(x)=x, \quad \forall x\in \X\setminus U\right\}\]
of all elements of $G$ that fix the complement of $U$ pointwise. A group for which $\rist_G(U)$ has a non-trivial action on $U$ for every open set $U\subseteq \X$ is said to have a \emph{micro-supported} action.

Our main result is the following:
\begin{Mainthm}\label{thm:NeighbourhoodStabilisersAreIsomorphic}
Let $\X$ be a first-countable Hausdorff space and $G$ be a group of homeomorphisms of $\X$. Let us further assume that the action of $G$ on $\X$ is minimal and that for any open set $U$, the action of $\rist_G(U)$ on $U$ is locally minimal. Then, for all $x,y\in \X$, there exists a homeomorphism $f\colon \X \rightarrow \X$ in the Ellis semigroup of $G$ such that $f\St_G^{0}(x)f^{-1}=\St_G^{0}(y)$. In particular, $\St_G^{0}(x)$ and $\St_G^{0}(y)$ are isomorphic.
\end{Mainthm}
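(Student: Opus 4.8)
The plan is to produce $f$ as a pointwise limit $f = \lim_n g_n$ of a carefully chosen sequence $g_n \in G$; such a limit automatically lies in the Ellis semigroup, so the only real tasks are to make the limit a homeomorphism and to arrange that it conjugates $\St_G^0(x)$ onto $\St_G^0(y)$. I would build the $g_n$ together with two nested sequences of open neighbourhoods, $A_1 \supseteq A_2 \supseteq \cdots$ of $x$ and $B_1 \supseteq B_2 \supseteq \cdots$ of $y$, satisfying three conditions: $g_n(A_n) = B_n$; the telescoping relation $g_{n+1} = g_n$ on $\X \setminus A_n$ (equivalently $g_{n+1}g_n^{-1} \in \rist_G(B_n)$); and that $(A_n)$ and $(B_n)$ are neighbourhood bases at $x$ and $y$ with $\bigcap_n \overline{A_n} = \{x\}$ and $\bigcap_n \overline{B_n} = \{y\}$, which first-countability and the Hausdorff property let me demand.

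Granting such a construction, the limit is easy to control. The telescoping relation forces $g_m = g_n$ on $\X \setminus A_n$ for all $m \geq n$, so $f(z) := \lim_n g_n(z)$ is well defined for $z \neq x$ (the values are eventually constant), and I set $f(x) := y$. Since $f$ agrees with the homeomorphism $g_n$ on $\X \setminus A_n$ one gets $f(A_n) = B_n$, and from $\bigcap_n A_n = \{x\}$ and $\bigcap_n B_n = \{y\}$ a short argument shows $f$ is a bijection; it is continuous because it coincides with $g_n$ on the open set $\X \setminus \overline{A_n}$, while $f(A_n) = B_n \to \{y\}$ gives continuity at $x$, and the symmetric statements for $f^{-1} = \lim_n g_n^{-1}$ (valid since $f^{-1} = g_n^{-1}$ off $B_n$) give continuity of the inverse. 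The conjugation is then a direct computation: given $h \in \St_G^0(x)$, choose $n$ with $h|_{A_n} = \id$; then $h$ preserves $A_n$ and its complement, and comparing $fhf^{-1}$ with $g_n h g_n^{-1}$ on $B_n$ and on $\X \setminus B_n$ separately yields $fhf^{-1} = g_n h g_n^{-1} \in G$, an element fixing the neighbourhood $B_n$ of $y$ pointwise, hence lying in $\St_G^0(y)$. Thus $f \St_G^0(x) f^{-1} \subseteq \St_G^0(y)$; applying the same computation to $f^{-1}$ (whose agreement property is symmetric) gives the reverse inclusion, and hence equality.

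The whole weight of the proof therefore falls on the inductive construction of the $g_n$ and the neighbourhoods, and this is where I expect the real difficulty. The step in which I shrink the range is the easy one: by minimality of the action of $\rist_G(B_n)$ on the piece of $B_n$ containing $y$ (and $g_n(x)$, which I keep close to $y$), I can find $s_n \in \rist_G(B_n)$ moving $g_n(x)$ into any prescribed small neighbourhood $B_{n+1} \ni y$, set $g_{n+1} = s_n g_n$, and define $A_{n+1} = g_{n+1}^{-1}(B_{n+1}) \subseteq A_n$. The genuine obstacle is forcing the \emph{domain} neighbourhoods to shrink to $x$: range-shrinking alone does not prevent some $z \neq x$ from lying in every $A_n$, which would make $f$ collapse $z$ and $x$ onto $y$ and destroy injectivity. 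Overcoming this requires, at the relevant steps, an element $s_n \in \rist_G(B_n)$ that simultaneously moves $g_n(x)$ close to $y$ and pulls $y$ back inside the image of a prescribed small neighbourhood of $x$ -- in effect, one that compresses a given neighbourhood onto an arbitrarily small neighbourhood of $y$.

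I would isolate this as a compressibility lemma and derive it from the hypotheses: the assumption that every rigid stabiliser acts locally minimally is precisely what supplies this flexibility and excludes rigid behaviour (for instance equicontinuous actions such as irrational rotations, where small rigid stabilisers are trivial and no compression is possible). Establishing this compressibility in the generality of a first-countable Hausdorff space -- rather than in the totally disconnected, compact setting of the motivating examples, where one simply manipulates clopen partitions -- is the part I expect to demand the most care, and it is the hinge on which the convergence of $(g_n)$ to an honest homeomorphism turns.
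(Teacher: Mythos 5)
Your overall framework is exactly the paper's: the telescoping limit argument (with $g_m = g_n$ off $A_n$ for $m \geq n$, hence a well-defined pointwise limit that is a homeomorphism) and the conjugation computation for elements fixing $A_n$ pointwise are the paper's Lemmas \ref{lemma:LimitOfHomeomorphismsIsHomeomorphismIfThingsAreNice} and \ref{lemma:NbhdStabilisersAreConjugate}, and those parts of your write-up are correct. You have also correctly located the crux: nothing in the range-shrinking step forces the domain neighbourhoods $A_n$ to shrink to $x$, and some ``compressibility'' supplied by the locally minimal rigid stabilisers must be invoked. But that is precisely where your proposal stops. The compressibility lemma is never stated precisely, let alone proved --- you only say you ``would isolate'' it and ``expect'' it to demand care --- and it is the \emph{only} place where the hypotheses of the theorem (minimality of $G$ and local minimality of every $\rist_G(U)$) enter. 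A proof whose sole load-bearing step is deferred is not a proof; this is a genuine gap, not a routine verification left to the reader.

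The gap is compounded by a circularity in the statement you gesture at, which must be resolved before any proof attempt. At stage $n$ you need $s_n \in \rist_G(B_n)$ and an open $B_{n+1}$ with $y,\, s_n g_n(x) \in B_{n+1} \subseteq s_n g_n(A'_{n+1})$, where $A'_{n+1}$ is a prescribed basis neighbourhood of $x$: the region that $s_n g_n(x)$ must land in depends on $s_n$ itself, so one cannot simply ``choose $s_n$ by minimality.'' The paper's Lemma \ref{lemma:ExistsConvergenceWithNonEmptyInterior} breaks this circularity by proving a statement about \emph{sequences}: for every neighbourhood $V \subseteq U$ of $x$ and every $y$ in a suitable set $W$, there is a sequence $h_j \in \rist_G(U)$ with $h_j(x) \to y$ such that $\bigcap_{j} h_j(V)$ is open and contains $y$. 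Since this intersection is a fixed open set independent of $j$, one may first choose the next target neighbourhood of $y$ inside it and only then pick the index $N$ with $h_N g_n(x)$ in that target; this is how the paper gets $U_{n+1} = g_{n+1}^{-1}(V_{n+1}) \subseteq U'_{n+1}$. The proof of that lemma is itself the real work, and it is not a compactness or clopen-partition argument: one shows the set $\mathcal{A} \subseteq W$ of points $y$ reachable by such a sequence is nonempty (it contains $x$), open (given a witness sequence $\{g_i\}$ for $y$ with $V' = \bigcap_i g_i(V)$, apply local minimality of $\rist_G(V')$ inside $V'$ and compose with elements of $\rist_G(V')$; these preserve $V'$ and fix its complement, so they do not change the intersection), and then minimality of $\rist_G(U)$ on $W$ pulls every point of $W$ back into $\mathcal{A}$. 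Nothing in your proposal anticipates this nonempty--open--minimality argument, and without it (or a substitute) your inductive construction of the $g_n$ cannot be carried out.
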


Recall that the Ellis semigroup of a group $G$ acting on a topological space $\X$ is the closure of the image of $G$ in the set $\X^{\X}$ of all maps from $\X$ to $\X$, with the topology of pointwise convergence (see \cite{Ellis60}).

As a direct corollary of Theorem \ref{thm:NeighbourhoodStabilisersAreIsomorphic}, we get that the stabilisers of regular points for such an action are isomorphic.

\begin{Maincor}\label{cor:StabilisersRegularPointsIsomorphic}
Let $G$ and $\X$ be as in Theorem \ref{thm:NeighbourhoodStabilisersAreIsomorphic}. Then, for all regular points $x,y\in \X$, we have $\St_G(x)\cong \St_G(y)$.
\end{Maincor}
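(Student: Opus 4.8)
The plan is to derive the corollary immediately from Theorem~\ref{thm:NeighbourhoodStabilisersAreIsomorphic}, using only the definition of a regular point. First I would recall that $x\in\X$ is regular precisely when $\St_G^{0}(x)=\St_G(x)$. Thus the hypothesis that both $x$ and $y$ are regular supplies the two identities $\St_G(x)=\St_G^{0}(x)$ and $\St_G(y)=\St_G^{0}(y)$, which are the only facts beyond the main theorem that the argument needs.

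Next I would apply Theorem~\ref{thm:NeighbourhoodStabilisersAreIsomorphic} to the pair $x,y$. Since $G$ and $\X$ satisfy the hypotheses of that theorem by assumption, there exists a homeomorphism $f\colon\X\to\X$ in the Ellis semigroup of $G$ with $f\St_G^{0}(x)f^{-1}=\St_G^{0}(y)$. As conjugation by the invertible map $f$ is a group isomorphism onto its image, this yields in particular $\St_G^{0}(x)\cong\St_G^{0}(y)$. Combining this isomorphism with the two regularity identities recorded above gives
\[
\St_G(x)=\St_G^{0}(x)\cong\St_G^{0}(y)=\St_G(y),
\]
which is exactly the assertion of the corollary.

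I do not expect any genuine obstacle here: the entire substance of the statement is carried by Theorem~\ref{thm:NeighbourhoodStabilisersAreIsomorphic}, and the corollary amounts to the observation that at a regular point the neighbourhood stabiliser coincides with the full point stabiliser. The only point requiring a little care is to invoke regularity at \emph{both} $x$ and $y$, so that the isomorphism between the neighbourhood stabilisers transports to an isomorphism of the point stabilisers at each end; if either point were singular, the argument would break down, since then the neighbourhood stabiliser would be a proper normal subgroup of the point stabiliser and no conclusion about $\St_G(x)$ or $\St_G(y)$ could be drawn directly.
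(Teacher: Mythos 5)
Your proof is correct and is exactly the argument the paper intends: the corollary follows immediately from Theorem \ref{thm:NeighbourhoodStabilisersAreIsomorphic} by substituting the identities $\St_G(x)=\St_G^{0}(x)$ and $\St_G(y)=\St_G^{0}(y)$ supplied by regularity. There is nothing to add; the paper itself treats this as a direct consequence and gives no separate proof.
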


%In the case where the space $\X$ is compact, we also obtain an easy criterion that can be used to show that two stabilisers are not isomorphic.

In the case where the space $\X$ is compact, we also show, using a theorem of Rubin \cite{Rubin96}, that two isomorphic stabilisers must have isomorphic groups of germs (Corollary \ref{cor:IsomorphismGerms}). In particular, in this case, the stabiliser of a regular point can never be isomorphic to the stabiliser of a singular point.

%\begin{Mainthm}\label{thm:IsomorphismPreservesGerms}
%Let $G$ and $\X$ be as in Theorem \ref{thm:NeighbourhoodStabilisersAreIsomorphic}, and suppose furthermore that $\X$ is compact. If, for $x,y\in \X$, the groups $\St_G(x)/\St_G^0(x)$ and $\St_G(y)/\St_G^0(y)$ are not isomorphic, then the stabilisers $\St_G(x)$ and $\St_G(y)$ are not isomorphic. In particular, if $x$ is regular and $y$ is singular, their stabilisers are not isomorphic.
%\end{Mainthm}

The proofs of these results are contained in Section \ref{section:Proofs}. The general strategy behind Theorem \ref{thm:NeighbourhoodStabilisersAreIsomorphic} was partly inspired by a construction of Golan and Sapir in \cite{GolanSapir17}, where they study isomorphisms between stabilisers of finite sets of points in Thompson's group $F$. However, as we work in much greater generality, our results cover a wide array of groups of interests. Indeed, in addition to Thompson's group $F$, our results also apply to Thompson's groups $V$ and $T$, to topological full groups of Cantor minimal systems, to branch groups, and to groups acting on trees with almost prescribed local actions. We briefly discuss these examples in Section \ref{section:Examples}.

%The proofs of these results are contained in Section \ref{section:Proofs}. In Section \ref{section:Examples}, we then briefly discuss some examples to which our results apply, namely topological full groups of Cantor minimal systems, Thompson's groups $F$, $V$ and $T$ and branch groups, such as Grigorchuk's first group of intermediate growth.

\subsection*{Acknowledgements}

The author would like to thank Laurent Bartholdi, Adrien Le Boudec, Tatiana Nagnibeda, Volodymyr Nekrashevych and Aitor Pérez for useful discussions regarding this work.

\section{Isomorphisms between neighbourhood stabilisers}\label{section:Proofs}

%Our main objective in this section is to prove Theorem \ref{thm:NeighbourhoodStabilisersAreIsomorphic}, which states that if the action of a group $G$ on a topological space $\X$ is micro-supported and locally minimal, then the neighbourhood stabilisers of any two points are isomorphic. We begin with a simple lemma that encapsulates the main idea behind the construction.

Our main goal in this section is to prove Theorem \ref{thm:NeighbourhoodStabilisersAreIsomorphic}. We begin with a simple lemma that encapsulates the main idea behind the construction.

\begin{lemma}\label{lemma:NbhdStabilisersAreConjugate}
Let $\X$ be a topological space, let $G$ be a group of homeomorphisms of $\X$, and let $x,y\in \X$ be two points. If there exists a homeomorphism $f\colon \X \rightarrow \X$ such that
\begin{enumerate}[label=(\roman*)]
\item $f(x)=y$,\label{item:ConditionxGoesToy}
\item for any neighbourhood $U$ of $x$, there exists $g_U\in G$ such that $g_U(z)=f(z)$ for all $z\in \X\setminus U$,\label{item:ConditionSameActionOutsideNbh}
\end{enumerate}
then $f\St_G^{0}(x)f^{-1} = \St_G^{0}(y)$.
\end{lemma}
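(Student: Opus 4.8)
The plan is to prove the two inclusions $f\St_G^{0}(x)f^{-1} \subseteq \St_G^{0}(y)$ and $\St_G^{0}(y) \subseteq f\St_G^{0}(x)f^{-1}$ separately, the second following from the first by a symmetry argument. For the first, fix $h \in \St_G^{0}(x)$. By definition there is a neighbourhood $V$ of $x$ on which $h$ acts as the identity. Since $f$ is a homeomorphism with $f(x) = y$, the set $f(V)$ is a neighbourhood of $y$, and a direct computation shows that $fhf^{-1}$ fixes $f(V)$ pointwise: for $w = f(v)$ with $v \in V$ one has $fhf^{-1}(w) = f(h(v)) = f(v) = w$. Thus the only thing left to verify is that $fhf^{-1}$ actually lies in $G$; once this is known, it automatically lies in $\St_G^{0}(y)$.

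The obstacle is precisely that $f$ need not belong to $G$, so $fhf^{-1}$ is a priori merely a homeomorphism of $\X$. The idea, suggested by the shape of condition \ref{item:ConditionSameActionOutsideNbh}, is to replace $f$ by a genuine group element that agrees with it where it matters. I would apply condition \ref{item:ConditionSameActionOutsideNbh} to the neighbourhood $U = V$ to obtain $g_V \in G$ with $g_V(z) = f(z)$ for all $z \in \X \setminus V$, and then claim that $fhf^{-1} = g_V h g_V^{-1}$, which immediately gives $fhf^{-1} \in G$ and hence $fhf^{-1}\in\St_G^0(y)$.

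To prove this claim, set $k = g_V^{-1} f$, so that the desired equality is equivalent to $k$ commuting with $h$. Because $g_V$ and $f$ agree on $\X \setminus V$, one checks that $g_V^{-1}$ and $f^{-1}$ agree on $f(\X\setminus V) = \X \setminus f(V)$, and consequently that $k$ fixes $\X \setminus V$ pointwise; in other words $k$ is supported inside $V$, while $h$ fixes $V$ pointwise and therefore preserves both $V$ and its complement. A short case analysis, splitting according to whether a point lies in $V$ or not and using that $k(V) = V$, then shows $kh = hk$ on all of $\X$. This establishes $fhf^{-1} = g_V h g_V^{-1} \in \St_G^{0}(y)$, giving the first inclusion.

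For the reverse inclusion I would observe that $f^{-1}$ satisfies the hypotheses of the lemma with the roles of $x$ and $y$ interchanged: clearly $f^{-1}(y) = x$, and given any neighbourhood $U'$ of $y$, applying condition \ref{item:ConditionSameActionOutsideNbh} to $f$ with the neighbourhood $f^{-1}(U')$ of $x$ produces $g \in G$ agreeing with $f$ off $f^{-1}(U')$, whence $g^{-1} \in G$ agrees with $f^{-1}$ off $U'$. The first inclusion applied to $f^{-1}$ then yields $f^{-1}\St_G^{0}(y)f \subseteq \St_G^{0}(x)$, that is, $\St_G^{0}(y) \subseteq f\St_G^{0}(x)f^{-1}$, completing the proof. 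I expect the commutation verification of the third paragraph to be the only genuinely delicate point; the remaining steps are bookkeeping with the homeomorphism $f$ and its support properties.
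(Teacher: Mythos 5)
Your proof is correct and follows essentially the same route as the paper: both replace $f$ by the group element furnished by condition (ii) for the fixed neighbourhood, reduce the identity $fhf^{-1}=g_Vhg_V^{-1}$ to a commutation between $h$ and an auxiliary homeomorphism supported in $V$ (your $k=g_V^{-1}f$ is just the inverse of the paper's $f^{-1}g_U$), and obtain the reverse inclusion by applying the argument to $f^{-1}$. If anything, you are slightly more careful than the paper in checking that $f^{-1}$ inherits hypothesis (ii), which the paper leaves implicit.
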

\begin{proof}
%Let $g\in \St_G^{0}(x)$ be an arbitrary element. We need to show that $fgf^{-1}\in \St_G^{0}(y)$. Since $g$ is in the neighbourhood stabiliser of $x$, there must exist a neighbourhood $U$ of $x$ such that $g$ acts trivially on $U$. By condition \ref{item:ConditionSameActionOutsideNbh}, we know that there exists $g_U\in G$ such that $g_U(z)=f(z)$ for all $z\in \X\setminus U$. Let us show that $g_Ugg_U^{-1} = fgf^{-1}$.

%We have $g_U(z)=f(z)$ for all $z\in \X\setminus U$. This immediately implies that $f(U)=g_U(U)$ and that $f^{-1}(z)=g_U^{-1}(z)$ for all $z\in \X\setminus f(U)$. Furthermore, by definition, we have $g(U)=U$, and thus $g(\X\setminus U) = \X\setminus U$. Hence, for $z\in \X\setminus f(U)$, we have
%\[(g_Ugg_U^{-1})(z)=g_U(g(f^{-1}(z))) = f(g(f^{-1}(z)))=(fgf^{-1})(z),\]
%where the second equality comes from the fact that $g(f^{-1}(z)) \in \X\setminus U$. Now, for $z\in f(U)$, since $g$ acts trivially on $U$, we get that
%\[g_Ugg_U^{-1}(z) = z = fgf^{-1}(z).\]
%We have thus proved that $g_Ugg_U^{-1}(z) = fgf^{-1}(z)$ for all $z\in \X$, and so $g_Ugg_U^{-1}=fgf^{-1}$. As $g_Ugg_U^{-1} \in G$, this means that $fgf^{-1}\in G$, and since $g$ is in the neighbourhood stabiliser of $x$, we must have that $fgf^{-1}$ is in the neighbourhood stabiliser of $f(x)=y$. Thus, we have shown that $f\St_G^{0}(x)f^{-1} \leq \St_G^{0}(y)$.

%Applying the same argument to $f^{-1}$ shows that $f^{-1}\St_G^{0}(y)f \leq \St_G^{0}(x)$, and thus $f\St_G^{0}(x)f^{-1} = \St_G^{0}(y)$.

Let $g\in \St_G^{0}(x)$ be an arbitrary element. We need to show that $fgf^{-1}\in \St_G^{0}(y)$. Since $g$ is in the neighbourhood stabiliser of $x$, there must exist a neighbourhood $U$ of $x$ such that $g$ acts trivially on $U$. By condition \ref{item:ConditionSameActionOutsideNbh}, we know that there exists $g_U\in G$ such that $g_U(z)=f(z)$ for all $z\in \X\setminus U$. Therefore, we have $f^{-1}g_U(z)=z$ for all $z\in \X\setminus U$.

Since $g$ acts trivially on $U$ and $f^{-1}g_U$ acts trivially outside of $U$, they must commute, so we have
\[f^{-1}g_Ugg_U^{-1}f=g,\]
and thus $g_Ugg_U^{-1}=fgf^{-1}$. As $g_Ugg_U^{-1} \in G$, this means that $fgf^{-1}\in G$, and since $g$ is in the neighbourhood stabiliser of $x$, we must have that $fgf^{-1}$ is in the neighbourhood stabiliser of $f(x)=y$. We have thus shown that $f\St_G^{0}(x)f^{-1} \leq \St_G^{0}(y)$.

Applying the same argument to $f^{-1}$ shows that $f^{-1}\St_G^{0}(y)f \leq \St_G^{0}(x)$, and so $f\St_G^{0}(x)f^{-1} = \St_G^{0}(y)$.
\end{proof}

Thus, in order to prove that two neighbourhood stabilisers are isomorphic, it suffices to construct a homeomorphism satisfying the conditions of Lemma \ref{lemma:NbhdStabilisersAreConjugate}. We will achieve this by taking a limit of elements of our groups (in other words, an element of the Ellis semigroup). The next lemma gives us conditions under which such a limit is a homeomorphism and satisfies the hypotheses of Lemma \ref{lemma:NbhdStabilisersAreConjugate}.

\begin{lemma}\label{lemma:LimitOfHomeomorphismsIsHomeomorphismIfThingsAreNice}
Let $\X$ be a first-countable Hausdorff topological space, let $G$ be a group of homeomorphisms of $\X$, and let $x,y\in \X$ be two points of $\X$. Suppose that there exist decreasing (with respect to inclusion) bases of neighbourhoods $\{U_i\}_{i\in \NN}$ and $\{V_i\}_{i\in \NN}$ of $x$ and $y$, respectively, and a sequence $\{g_i\}_{i\in \NN}$ of elements of $G$ such that
\begin{enumerate}[label=(\roman*)]
\item $g_i(U_i) = V_i$\label{item:giUiISVi}
\item $g_{i+1}(z) = g_{i}(z)$ for all $z\in \X\setminus U_{i}$\label{item:giAgreesWithPreviousOne}
\end{enumerate}
for all $i\in \NN$. Then, the sequence $\{g_i\}_{i\in \NN}$ converges pointwise to a homeomorphism $f\colon \X \rightarrow \X$ such that $f(x)=y$. Furthermore, we have $f\St_G^{0}(x)f^{-1} = \St_G^{0}(y)$.
\end{lemma}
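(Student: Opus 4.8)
The plan is to build $f$ as the pointwise limit of the $g_i$ and then check, in turn, that it is well defined, that it is a homeomorphism, and finally that it meets the hypotheses of Lemma~\ref{lemma:NbhdStabilisersAreConjugate}. First I would establish pointwise convergence. For a point $z\neq x$, Hausdorffness provides disjoint open sets separating $x$ and $z$; since $\{U_i\}_{i\in\NN}$ is a neighbourhood base of $x$, some $U_N$ lies inside the open set around $x$ and hence misses $z$ (in fact $z\notin\overline{U_N}$). Condition~\ref{item:giAgreesWithPreviousOne} then forces $g_i(z)=g_N(z)$ for all $i\geq N$, so the sequence is eventually constant and I may set $f(z):=g_N(z)$. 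For $z=x$, condition~\ref{item:giUiISVi} gives $g_i(x)\in g_i(U_i)=V_i$, and since $\{V_i\}$ is a neighbourhood base of $y$ the sequence $g_i(x)$ converges to $y$; I set $f(x):=y$. This defines $f\colon\X\to\X$ with $f(x)=y$.

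Before proving continuity I would record two consequences of the hypotheses. Using that $g_{i+1}$ agrees with $g_i$ off $U_i$ and that both are bijections with $g_i(U_i)=V_i$, one gets $g_{i+1}(U_i)=V_i$ as well; and by construction $f=g_N$ on $\X\setminus U_N$ for every $N$. From these, by tracking the last of the nested sets $U_i$ to which a point of $U_N$ belongs, I would deduce $f(U_N)\subseteq V_N$. Continuity of $f$ then splits into two cases. At any $z\neq x$, the set $\X\setminus\overline{U_N}$ (with $N$ as above, so that $z\notin\overline{U_N}$) is an open neighbourhood of $z$ on which $f$ coincides with the homeomorphism $g_N$, giving continuity there. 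At $x$, continuity follows from $f(U_N)\subseteq V_N$: any neighbourhood of $y$ contains some $V_N$, and $U_N$ is then a neighbourhood of $x$ mapped into it.

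For the homeomorphism property it remains to invert $f$. The key observation is that the sequence $\{g_i^{-1}\}$ satisfies exactly the symmetric hypotheses, with the roles of $x,y$ and of $\{U_i\},\{V_i\}$ interchanged (here one uses $g_{i+1}(U_i)=V_i$ to check that $g_{i+1}^{-1}$ agrees with $g_i^{-1}$ off $V_i$). Hence $\{g_i^{-1}\}$ also converges pointwise, to a continuous map $h\colon\X\to\X$ with $h(y)=x$. I would then verify $f\circ h=h\circ f=\id_{\X}$ by exploiting that away from the two distinguished points each sequence is eventually constant, so the compositions can be computed from a single $g_N$ and its inverse. This shows $f$ is a bijection with continuous inverse $h$, i.e.\ a homeomorphism.

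Finally, I would invoke Lemma~\ref{lemma:NbhdStabilisersAreConjugate}. Its condition~\ref{item:ConditionxGoesToy} is $f(x)=y$, already established. For condition~\ref{item:ConditionSameActionOutsideNbh}, given any neighbourhood $U$ of $x$, pick $N$ with $U_N\subseteq U$; then $g_U:=g_N\in G$ satisfies $g_U(z)=g_N(z)=f(z)$ for all $z\in\X\setminus U\subseteq\X\setminus U_N$. The lemma then yields $f\St_G^0(x)f^{-1}=\St_G^0(y)$, as required. I expect the main obstacle to be proving that $f$ is a homeomorphism, and within that the continuity at points $z\neq x$: the crucial point, which makes the bare first-countable Hausdorff hypothesis suffice, is that Hausdorffness together with $\{U_i\}$ being a neighbourhood base lets me separate any such $z$ from some $\overline{U_N}$, so that $f$ agrees locally with the single homeomorphism $g_N$ rather than being merely an uncontrolled pointwise limit.
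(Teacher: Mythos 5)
Your proposal is correct and follows essentially the same route as the paper: pointwise convergence (the sequence is eventually constant off each $U_N$, by Hausdorffness), the symmetric argument applied to $\{g_i^{-1}\}$ to produce the inverse map, and then an appeal to Lemma \ref{lemma:NbhdStabilisersAreConjugate} with $g_U = g_N$ for $U_N \subseteq U$. The only cosmetic difference is that you verify continuity of $f$ and $h$ directly (via $f(U_N)\subseteq V_N$ and local agreement with $g_N$ near points $z\neq x$), whereas the paper verifies that $f$ and $f^{-1}$ are open maps --- the same facts viewed from the other side.
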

\begin{proof}
Let us first show that the sequence $\{g_i\}_{i\in \NN}$ converges pointwise. Notice that since $\X$ is Hausdorff, if the limit exists, it must be unique.

For all $i\in \NN$, we have $g_i(x) \in V_i$, since $x\in U_i$ and $g_i(U_i)=V_i$ by hypothesis. As $\{V_i\}_{i\in \NN}$ is a decreasing a basis of neighbourhoods of $y$, we conclude that for every neighbourhood $V$ of $y$, there exists $N\in \NN$ such that $g_i(x)\in V$ for all $i\geq N$. Thus, we have $\lim_{i\to \infty} g_i(x) = y$.

Now, for $z\in \X$ with $z\ne x$, since $\X$ is Hausdorff, we know that there must exist $N\in \NN$ such that $z\notin U_N$, and thus $z\notin U_i$ for all $i\geq N$, since the sequence is decreasing. Consequently, for every $m\in \NN$, we have by hypothesis
\[g_{N+m+1}(z) = g_{N+m}(z).\]
Thus, by induction, we get that $g_i(z) = g_N(z)$ for all $i\geq N$. Therefore, we have $\lim_{i\to \infty} g_i(z) = g_N(z)$.

We have just shown that the sequence $\{g_i\}_{i\in \NN}$ converges to a map $f\colon \X \rightarrow \X$ such that $f(x)=y$. It remains to show that this map is a homeomorphism.

To see this, let us first notice that the sequence $\{g_i^{-1}\}_{i\in \NN}$ satisfies conditions \ref{item:giUiISVi} an \ref{item:giAgreesWithPreviousOne} if we exchange $U_i$ and $V_i$. Indeed, we clearly have $g_i^{-1}(V_i) = U_i$. This implies that if $z\in \X\setminus V_i$, then we have $g_i^{-1}(z) \in \X \setminus U_i$. Now, by condition \ref{item:giAgreesWithPreviousOne}, we have
\[g_{i+1}(g_i^{-1}(z)) = g_i(g_i^{-1}(z)) = z.\]
It follows that $g_{i+1}^{-1}(z) = g_i^{-1}(z)$ for all $z\in \X\setminus V_i$.

Therefore, the arguments above also apply to the sequence $\{g_i^{-1}\}_{i\in \NN}$, which must then converge to a map $h\colon \X \rightarrow \X$ satisfying $h(y)=x$. We will show that $h$ is the inverse of $f$. We already have $h(f(x))=x$. Now, let $z\in \X$ be different from $x$. Then, as above, there exists $N\in \NN$ such that $z\in \X\setminus U_N$, and therefore $f(z) = g_N(z)$. Since $z\notin U_N$, we must have that $g_N(z)\notin V_N$, since $g_N(U_N)=V_N$. Therefore, by a similar argument to the one above, we must have
\[h(g_N(z)) = g_N^{-1}(g_N(z)) = z.\]
This shows that $h\circ f$ is the identity map on $\X$. By a symmetric argument, we find that $f\circ h$ is also the identity, so $h=f^{-1}$.

To prove that $f$ is a homeomorphism, we still need to prove that $f$ and $f^{-1}$ are both continuous. Let us prove that $f$ is an open map. To see this, it suffices to show that for every open set $U\subseteq \X$ and for every $z\in U$, there exists some open subset $U'\subseteq U$ containing $z$ and such that $f(U')$ is open. Now, if $z\ne x$, we can choose $U'$ such that $U'\cap U_N = \emptyset$ for some $N\in \NN$ large enough, since $\X$ is Hausdorff. In that case, by the above argument, we have $f(U')= g_N(U')$. As $g_N$ is a homeomorphism, we find that $f(U')$ is open.

In the case where $z=x$, since $\{U_i\}_{i\in \NN}$ is a basis of neighbourhoods, there exists $N\in \NN$ such that $U_N \subseteq U$. We can thus choose $U'=U_N$. The result will then follow as soon as we show that $f(U_N)=V_N$. We have seen above that $f(w)=g_N(w)$ for all $w\in \X\setminus U_N$. As $g_N(U_N)=V_N$, this means that
\[f(\X\setminus U_N) = g_N(\X\setminus U_N) = \X\setminus V_N.\]
It follows from the fact that $f$ is a bijection that $f(U_N)=V_N$.

We have thus shown that $f$ is an open map. By symmetry, $f^{-1}$ is also open, so $f$ is a homeomorphism.

Finally, the fact that $f\St_G^{0}(x)f^{-1} = \St_G^{0}(y)$ follows directly from Lemma \ref{lemma:NbhdStabilisersAreConjugate}. Indeed, the map $f\colon \X\rightarrow \X$ is a homeomorphism that clearly satisfies condition \ref{item:ConditionxGoesToy} of Lemma \ref{lemma:NbhdStabilisersAreConjugate}. To see that it also satisfies condition \ref{item:ConditionSameActionOutsideNbh} of that same lemma, let $U\subset \X$ be a neighbourhood of $x$. Then, as $\{U_i\}_{i\in \NN}$ is a basis of neighbourhoods of $x$, there exists $N\in \NN$ such that $U_N\subseteq U$. By what we have seen above, we have $f(z)=g_N(z)$ for all $z\in \X\setminus U_N$ and so condition \ref{item:ConditionSameActionOutsideNbh} is satisfied.
\end{proof}

Before we proceed to our main theorem and its proof, we still need a last technical lemma. 
%We first introduce a definition.

%\begin{defn}
%Let $\X$ be a topological space and let $G$ be a group acting on $\X$ by homeomorphisms. We will say that the action of $G$ on $\X$ is locally minimal\todo{Is this the right name for this notion?} if for all $x\in \X$, there exists a neighbourhood $U\subseteq \X$ of $x$ such that for all $y\in U$, the set $G(y) \cap U$ is dense in $U$, where $G(y)$ is the orbit of $y$ under the action of $G$.
%\end{defn}
%\begin{rem}
%The action of $G$ on $\X$ is locally minimal if and only if every open set $U\subseteq \X$ can be decomposed as a disjoint union $\bigsqcup_{i}W_i$ of clopen (with respect to the subset topology on $U$) $G$-invariant subsets $W_i\subseteq U$ such that the action of $G$ on each $W_i$ is minimal.
%\end{rem}

\begin{lemma}\label{lemma:ExistsConvergenceWithNonEmptyInterior}
%Let $\X$ be a first-countable\todo{Also Hausdorff, or is it not necessary?} topological space and let $G$ be a group acting minimally on $\X$ by homeomorphisms. Suppose furthermore that for every open set $U\subseteq \X$, the action of $\rist_G(U)$ on $U$ is locally minimal. Let $x\in \X$ be any point, and let $U,V,W\subseteq \X$ be three neighbourhoods of $x$ such that $V,W\subseteq U$ and the action of $\rist_G(U)$ is minimal on $W$. Then, for all $y\in W$, there exists a sequence $\{g_i\}_{i\in \NN}$ of elements of $\rist_G(U)$ such that
%Let $\X$ be a first-countable topological space and let $G$ be a group acting minimally on $\X$ by homeomorphisms. Suppose furthermore that for every open set $U\subseteq \X$, the action of $\rist_G(U)$ on $U$ is locally minimal (see Section \ref{section:Introduction}). Let $x\in \X$ be any point, let $U\subseteq \X$ be a neighbourhood of $x$ and let $W\subseteq U$ be a neighbourhood of $x$ such that the action of $\rist_G(U)$ is minimal on $W$. Then, for every $y\in W$ and for every neighbourhood $V\subseteq U$ of $x$, there exists a sequence $\{g_i\}_{i\in \NN}$ of elements of $\rist_G(U)$ such that
Let $\X$ be a first-countable topological space and let $G$ be a group of homeomorphisms of $\X$. Suppose that the action of $G$ on $\X$ is minimal, and that for every open set $U\subseteq \X$, the action of $\rist_G(U)$ on $U$ is locally minimal (see Section \ref{section:Introduction} for a definition). Let $x\in \X$ be any point, let $U\subseteq \X$ be a neighbourhood of $x$ and let $W\subseteq U$ be a neighbourhood of $x$ such that the action of $\rist_G(U)$ is minimal on $W$. Then, for every $y\in W$ and for every neighbourhood $V\subseteq U$ of $x$, there exists a sequence $\{g_i\}_{i\in \NN}$ of elements of $\rist_G(U)$ such that
\begin{enumerate}[label=(\roman*)]
\item $\lim_{i\to \infty}g_i(x) = y$\label{item:ConditionLimitxIsy}
\item $\bigcap_{i=0}^{\infty}g_i(V)$ is open and contains $y$.\label{item:ConditionNonEmptyInterior}
\end{enumerate}
\end{lemma}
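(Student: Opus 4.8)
The plan is to reduce condition (ii) to a triviality by forcing every $g_i$ to have the \emph{same} image $g_i(V)$. Concretely, I would look for the $g_i$ in the form $g_i = k_i h$, where $h \in \rist_G(U)$ is a single well-chosen element and each $k_i$ lies in $\rist_G(h(V))$. Since an element of $\rist_G(h(V))$ fixes the complement of $h(V)$ pointwise and is a homeomorphism of $\X$, it maps $h(V)$ onto itself; hence $g_i(V) = k_i(h(V)) = h(V)$ for every $i$, and therefore $\bigcap_i g_i(V) = h(V)$ is automatically open. With this shape fixed, it only remains to choose $h$ so that $y \in h(V)$ and to choose the $k_i$ so that $k_i(h(x)) \to y$; the membership $g_i \in \rist_G(U)$ will follow from $h(V) \subseteq U$, which holds because $h \in \rist_G(U)$ maps $U$ onto itself and $V \subseteq U$.

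The heart of the argument is the choice of $h$, and this is where I expect the main difficulty. First I would apply the local minimality hypothesis to the open set $V$ itself: decompose $V$ into clopen $\rist_G(V)$-invariant pieces on which $\rist_G(V)$ acts minimally, and let $P$ be the piece containing $x$, an open neighbourhood of $x$ contained in $V$. The key structural remark is that for any $h \in \rist_G(U)$ one has $h\,\rist_G(V)\,h^{-1} = \rist_G(h(V))$, and conjugation by $h$ intertwines the system $\rist_G(V) \curvearrowright V$ with $\rist_G(h(V)) \curvearrowright h(V)$; consequently $h(P)$ is precisely the minimal component of $h(V)$ containing $h(x)$. Thus, to place $h(x)$ and $y$ in a common minimal component, it suffices to arrange $y \in h(P)$, i.e. $h^{-1}(y) \in P$.

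To achieve $h^{-1}(y)\in P$ I would use minimality of $\rist_G(U)$ on $W$. Since $x \in P \cap W$, the set $P \cap W$ is a nonempty open subset of $W$, and as $y \in W$ its $\rist_G(U)$-orbit is dense in $W$; hence there is $g \in \rist_G(U)$ with $g(y) \in P \cap W$, and I set $h = g^{-1}$. Then $O := h(P)$ is an open neighbourhood of $y$ on which $\rist_G(h(V))$ acts minimally and which also contains $h(x)$. Finally, using first-countability, fix a decreasing neighbourhood basis $\{N_i\}_{i\in\NN}$ of $y$ with $N_i \subseteq O$; minimality gives density of the $\rist_G(h(V))$-orbit of $h(x)$ in $O$, so for each $i$ I can pick $k_i \in \rist_G(h(V))$ with $k_i(h(x)) \in N_i$. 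Setting $g_i = k_i h$ then yields $g_i(x) = k_i(h(x)) \in N_i \to y$, which is condition (i), while $g_i(V) = h(V) \ni y$ for all $i$ gives condition (ii). The one genuinely delicate point, as noted, is the compatibility of the local-minimality decompositions of $V$ and $h(V)$, which the conjugation identity $h\,\rist_G(V)\,h^{-1} = \rist_G(h(V))$ resolves cleanly.
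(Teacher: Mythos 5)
Your proof is correct, and it takes a genuinely more direct route than the paper's. The paper argues by topological bootstrapping: it introduces the set $\mathcal{A}\subseteq W$ of all points $y$ for which a suitable sequence exists, notes $x\in\mathcal{A}$, proves $\mathcal{A}$ is open (given a witnessing sequence $\{g_i\}$ with $V'=\bigcap_i g_i(V)$, it corrects it by elements $h_i\in\rist_G(V')$, which steer $g_i(x)$ toward nearby points while leaving the intersection equal to $V'$), and finally uses minimality of $\rist_G(U)$ on $W$ to conclude $\mathcal{A}=W$ by pulling a sequence back by some $g^{-1}$. You dispense with $\mathcal{A}$ and the openness step entirely: the conjugation identity $h\rist_G(V)h^{-1}=\rist_G(h(V))$ transports the minimal component $P\ni x$ of $V$ to the minimal component $h(P)$ of $h(V)$, the single use of minimality of $\rist_G(U)$ on $W$ (choosing $g$ with $g(y)\in P\cap W$ and setting $h=g^{-1}$) plays the role of the paper's final minimality step, and one further use of minimality of $\rist_G(h(V))$ on $h(P)$ produces the sequence outright --- with the bonus that all the sets $g_i(V)=h(V)$ are literally equal, so condition (ii) holds by inspection rather than by an argument. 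In substance both proofs consume the same two minimality inputs and both end with $\bigcap_i g_i(V)$ equal to a single translate of $V$ (indeed, tracing the paper's proof through its one essential application of openness yields sequences conjugate to yours); the difference is architectural. Your version is shorter, fully constructive, and isolates a reusable identity; the paper's ``nonempty $+$ open $+$ minimal'' pattern avoids any bookkeeping about where the minimal component of $x$ lands, which makes it slightly more robust but longer. All the delicate points in your argument check out: $h(U)=U$ and hence $h(V)\subseteq U$ because $h\in\rist_G(U)$, elements of $\rist_G(h(V))$ preserve $h(V)$ and lie in $\rist_G(U)$, and the intertwining does carry minimality of $\rist_G(V)$ on $P$ to minimality of $\rist_G(h(V))$ on $h(P)$.
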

\begin{proof}
Let us fix an arbitrary neighbourhood $V\subseteq U$ of $x$, and let $\mathcal{A}\subseteq W$ be the set of all elements of $W$ for which there exists a sequence of elements of $\rist_G(U)$ satisfying conditions \ref{item:ConditionLimitxIsy} and \ref{item:ConditionNonEmptyInterior}. We wish to show that $\mathcal{A}=W$.

Let us first remark that $\mathcal{A}$ is non-empty. Indeed, we clearly have $x\in \mathcal{A}$.

We will now show that $\mathcal{A}$ is open. Let $y\in \mathcal{A}$ be any point. Then, by definition, there exists a sequence $\{g_i\}_{i\in \NN}$ of elements of $\rist_G(U)$ satisfying conditions \ref{item:ConditionLimitxIsy} and \ref{item:ConditionNonEmptyInterior}. In particular, the set $V'=\bigcap_{i=0}^{\infty}g_i(V)$ is a neighbourhood of $y$. By our assumptions on the action of $G$ on $\X$, there exists a neighbourhood $W'$ of $y$ contained in $V'$ such that the action of $\rist_G(V')$ on $W'$ is minimal. We claim that $W'\subseteq \mathcal{A}$.

To see this, let $y'\in W'$ be an arbitrary element of $W'$. As $\lim_{i\to\infty}g_i(x)=y$, we may assume without loss of generality, removing a few elements from the sequence if necessary, that $g_i(x)\in W'$ for all $i\in \NN$. Since the action of $\rist_G(V')$ on $W'$ is minimal and since $\X$ is first countable, there exists a sequence $\{h_i\}_{i\in \NN}$ of elements of $\rist_G(V')$ such that $\lim_{i\to\infty}h_ig_i(x) = y'$. Notice that since $V'\subseteq U$, we have $h_ig_i\in \rist_G(U)$. Thus, to show that $y'\in \mathcal{A}$, it remains only to show that $\bigcap_{i=0}^{\infty}h_ig_i(V)$ is a neighbourhood of $y'$.

Since $h_i\in \rist_G(V')$, we have $h_i(V')=V'$ for all $i\in \NN$. It follows that $V'=\bigcap_{i=0}^{\infty}h_ig_i(V)$. Indeed, since $V'=\bigcap_{i=0}^{\infty}g_i(V)$, we have $V'\subseteq g_i(V)$, and so
\[V'=h_i(V') \subseteq h_ig_i(V)\]
for all $i\in \NN$. Therefore, $V'\subseteq \bigcap_{i=0}^{\infty}h_ig_i(V)$. To show the other inclusion, it suffices to notice that if $x'\in \left(\bigcap_{i=0}^{\infty}h_ig_i(V)\right) \setminus V'$, then since $h_i$ acts trivially outside of $V'$, we have $x'\in \left(\bigcap_{i=0}^{\infty}g_i(V) \right)\setminus V'=\emptyset$, which is absurd. We have thus shown that $\bigcap_{i=0}^{\infty}h_ig_i(V) = V'$, which is a neighbourhood of $y'$, since $y'\in W'\subseteq V'$. This shows that $y'\in \mathcal{A}$, and as $y'$ was arbitrary, we have $W'\subseteq \mathcal{A}$. Therefore, for any $y\in \mathcal{A}$, the set $\mathcal{A}$ also contains a neighbourhood of $y$, and thus is an open set.

As we have seen above, $\mathcal{A}$ is a non-empty open set. Thus, by the minimality of the action of $\rist_G(U)$ on $W$, for every $y\in W$, there exists $g\in \rist_G(U)$ such that $g(y)\in \mathcal{A}$. By definition, there exists a sequence $\{g_i\}_{i\in \NN}$ of elements of $\rist_G(U)$ such that $\lim_{i\to\infty} g_i(x) = g(y)$ and $\bigcap_{i=0}^{\infty}g_i(V)$ is a neighbourhood of $g(y)$. As $g$ is a homeomorphism, we have $\lim_{i\to\infty}g^{-1}g_i(x) = y$. Furthermore, we have
\[\bigcap_{i=0}^{\infty}g^{-1}g_i(V) = g^{-1}\left(\bigcap_{i=0}^{\infty}g_i(V)\right),\]
so $\bigcap_{i=0}^{\infty}g^{-1}g_i(V)$ is open and contains $y$. We conclude that $y\in \mathcal{A}$. As $y$ was arbitrary, this proves that $\mathcal{A}=W$.
\end{proof}

We are now in position to prove Theorem \ref{thm:NeighbourhoodStabilisersAreIsomorphic}, which we restate for convenience.

\begin{NbhStabsIsom}
Let $\X$ be a first-countable Hausdorff space and $G$ be a group of homeomorphisms of $\X$. Let us further assume that the action of $G$ on $\X$ is minimal and that for any open set $U$, the action of $\rist_G(U)$ on $U$ is locally minimal. Then, for all $x,y\in \X$, there exists a homeomorphism $f\colon \X \rightarrow \X$ in the Ellis semigroup of $G$ such that $f\St_G^{0}(x)f^{-1}=\St_G^{0}(y)$. In particular, $\St_G^{0}(x)$ and $\St_G^{0}(y)$ are isomorphic.
\end{NbhStabsIsom}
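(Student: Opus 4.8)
The plan is to deduce the theorem from Lemma~\ref{lemma:LimitOfHomeomorphismsIsHomeomorphismIfThingsAreNice}: it suffices to produce decreasing neighbourhood bases $\{U_i\}_{i\in\NN}$ of $x$ and $\{V_i\}_{i\in\NN}$ of $y$ together with a sequence $\{g_i\}_{i\in\NN}$ in $G$ satisfying $g_i(U_i)=V_i$ and $g_{i+1}=g_i$ on $\X\setminus U_i$. Once this is done, Lemma~\ref{lemma:LimitOfHomeomorphismsIsHomeomorphismIfThingsAreNice} yields a homeomorphism $f$ with $f(x)=y$ and $f\St_G^{0}(x)f^{-1}=\St_G^{0}(y)$, and since $f$ is by construction a pointwise limit of the $g_i\in G$, it lies in the closure of $G$ in $\X^{\X}$, that is, in the Ellis semigroup. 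Thus the entire content of the theorem is the construction of this matched data.

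I would build the data by induction, maintaining at stage $n$ an element $g_n\in G$ and open sets $U_n\ni x$, $V_n:=g_n(U_n)\ni y$, decreasing in $n$, together with the invariant that $g_n(x)$ and $y$ lie in a common minimal component of the action of $\rist_G(V_n)$ on $V_n$ (equivalently, conjugating by $g_n$, that $x$ and $g_n^{-1}(y)$ lie in a common minimal component of $\rist_G(U_n)$ on $U_n$). For the base case I take $U_0=V_0=\X$ and $g_0=\id$; here $\rist_G(\X)=G$ acts minimally on $\X$ by hypothesis, so the invariant holds trivially with the single component $\X$. At each step I want to replace $g_n$ by $g_{n+1}=r\,g_n$ with $r\in\rist_G(V_n)$, so that automatically $g_{n+1}=g_n$ on $\X\setminus U_n$, and to choose smaller matched neighbourhoods inside $U_n$ and $V_n$.

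The engine of the inductive step is Lemma~\ref{lemma:ExistsConvergenceWithNonEmptyInterior}, applied with base point $g_n(x)$, ambient neighbourhood $V_n$, minimal component $W$ the common component of $g_n(x)$ and $y$ (available by the invariant), and target $y$. It produces a sequence $\{r_k\}$ in $\rist_G(V_n)$ with $r_k(g_n(x))\to y$ and, for a prescribed small neighbourhood $V$ of $g_n(x)$, an \emph{open} set $V^{*}=\bigcap_k r_k(V)$ containing $y$ (condition~\ref{item:ConditionNonEmptyInterior}). The point of the open-intersection conclusion is exactly that the images of a fixed neighbourhood do not collapse, so that the target side always retains a genuine neighbourhood of $y$. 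I would then shrink the two families alternately: on odd steps I use a source-side application (base point $x$, target $g_n^{-1}(y)$) to make $U_{n+1}$ small while keeping $V_{n+1}=g_{n+1}(U_{n+1})$ an open neighbourhood of $y$, and on even steps the image-side application above to make $V_{n+1}$ small while keeping $U_{n+1}$ an open neighbourhood of $x$; here first-countability furnishes fixed countable bases $\{A_i\}$ at $x$ and $\{B_i\}$ at $y$ into which the respective families can be driven. Choosing the multiplier with large index $k$ places the new image point $g_{n+1}(x)$ inside a chosen minimal component of $\rist_G(V_{n+1})$ through $y$, which re-establishes the invariant at the smaller scale; the equivalence of the source-side and image-side forms of the invariant under conjugation by $g_{n+1}$ makes this bookkeeping consistent between the two kinds of steps.

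The main obstacle is precisely this simultaneous control: the source neighbourhoods are easy to shrink, but since the $g_n$ may be badly distorting, one must prevent the images of neighbourhoods of $x$ from failing to be neighbourhoods of $y$ (and from failing to shrink), and one must keep the two distinguished points in a common minimal component so that Lemma~\ref{lemma:ExistsConvergenceWithNonEmptyInterior} remains applicable. The open-intersection conclusion of that lemma resolves the first difficulty and the inductive choice of components resolves the second, with minimality of $G$ seeding the invariant at the first step. With the matched data in hand, Lemma~\ref{lemma:LimitOfHomeomorphismsIsHomeomorphismIfThingsAreNice} (resting on Lemma~\ref{lemma:NbhdStabilisersAreConjugate}) delivers the homeomorphism $f$ in the Ellis semigroup with $f\St_G^{0}(x)f^{-1}=\St_G^{0}(y)$, and conjugation by $f$ exhibits the claimed isomorphism.
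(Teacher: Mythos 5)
Your proposal is correct and takes essentially the same route as the paper: the paper's proof is precisely this induction, concluding via Lemma \ref{lemma:LimitOfHomeomorphismsIsHomeomorphismIfThingsAreNice}, using Lemma \ref{lemma:ExistsConvergenceWithNonEmptyInterior} as the engine, and maintaining exactly your invariant in the form $g_n(x),y\in W_n$ with $W_n\subseteq V_n$ a minimal component of $\rist_G(V_n)$, with $g_{n+1}$ obtained from $g_n$ by multiplication by a rigid stabiliser element. The only difference is bookkeeping: rather than alternating source-side and image-side steps, the paper shrinks both sides in every step, handling the image side by intersecting $V_{n+1}$ with $V'_{n+1}$ and the source side by feeding $V_n\cap g_n(U'_{n+1})$ into the $V$-slot of Lemma \ref{lemma:ExistsConvergenceWithNonEmptyInterior}, so that the open intersection $\bigcap_j h_j(V_n\cap g_n(U'_{n+1}))$ forces $U_{n+1}\subseteq U'_{n+1}$; your alternating variant accomplishes the same thing.
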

\begin{proof}
Let $x,y\in \X$ be two arbitrary points of $\X$, and let $\{U'_i\}_{i\in \NN}$, $\{V'_i\}_{i\in \NN}$ be bases of neighbourhoods of $x$ and $y$, respectively. Let us assume that for some $n\in \NN$, we have a decreasing (with respect to inclusion) collection $\{U_i\}_{0\leq i \leq n}$ of neighbourhoods of $x$, two decreasing collections $\{V_i\}_{0\leq i \leq n}$ and $\{W_i\}_{0\leq i \leq n}$ of neighbourhoods of $y$ and a collection $\{g_i\}_{0\leq i \leq n}$ of elements of $G$ such that
\begin{enumerate}[label=(\roman*)]
\item $V_i=g_i(U_i)$\label{item:ConditionV=gU}
\item $U_i \subseteq U'_i$\label{item:ConditionUibasis}
\item $V_i\subseteq V'_i$\label{item:ConditionVibasis}
\item $W_i\subseteq V_i$ and $\rist_G(V_i)$ acts minimally on $W_i$\label{item:ConditionWInV}
\item $g_i(x) \in W_i$\label{item:ConditiongxInW}
\item $g_i(z) = g_{i-1}(z)$ for all $z\in \X\setminus U_{i-1}$,\label{item:ConditionDoNotChangeOutsideU}
\end{enumerate}
for all $0\leq i \leq n$, where we set by convention $U_{-1}=\X$ and $g_{-1}=1$. We will show that we can then construct a neighbourhood $U_{n+1}$ of $x$, two neighbourhoods $V_{n+1}, W_{n+1}$ of $y$ and an element $g_{n+1}\in G$ such that the collections $\{U_i\}_{0\leq i \leq n+1}$, $\{V_i\}_{0\leq i \leq n+1}$, $\{W_i\}_{0\leq i \leq n+1}$ and $\{g_i\}_{0\leq i \leq n+1}$ also satisfy all the conditions above.

By Lemma \ref{lemma:ExistsConvergenceWithNonEmptyInterior}, there exists a sequence $\{h_j\}_{j\in \NN}$ of elements of $\rist_G(V_n)$ such that $\lim_{j\to\infty}h_j(g_n(x)) = y$ and $\bigcap_{j=0}^{\infty}h_j(V_n\cap g_n(U'_{n+1}))$ is a neighbourhood of $y$. Let us set
\[V_{n+1} = V'_{n+1} \cap W_n \cap \bigcap_{j=0}^{\infty}h_j(V_n\cap g_n(U'_{n+1})),\]
which is open and is not empty, since it contains $y$. By our assumption on the action of $G$ on $\X$, there exists a neighbourhood $W_{n+1}$ of $y$ such that $W_{n+1}\subseteq V_{n+1}$ and the action of $\rist_G(V_{n+1})$ on $W_{n+1}$ is minimal. Since $\lim_{j\to\infty}h_j(g_n(x)) = y$, there exists $N\in \NN$ such that $h_Ng_n(x) \in W_{n+1}$. Let us set $g_{n+1}= h_Ng_n$ and $U_{n+1} = g_{n+1}^{-1}(V_{n+1})$.

We now need to verify that $U_{n+1}$, $V_{n+1}$, $W_{n+1}$ and $g_{n+1}$ satisfy all the required conditions. Conditions \ref{item:ConditionV=gU}, \ref{item:ConditionVibasis}, \ref{item:ConditionWInV} and \ref{item:ConditiongxInW} are satisfied by construction. For condition \ref{item:ConditionUibasis}, notice that we have $V_{n+1}\subseteq h_Ng_n(U'_{n+1}) = g_{n+1}(U'_{n+1})$, so $U_{n+1} \subseteq U'_{n+1}$. For condition \ref{item:ConditionDoNotChangeOutsideU}, notice that if $z\in \X\setminus U_n$, then we have $g_n(z) \in \X\setminus V_n$ by condition \ref{item:ConditionV=gU}. Therefore, as $h_N\in \rist_G(V_n)$, we have $h_N(g_n(z)) = g_n(z)$. Hence, $g_{n+1}(z) = h_N(g_n(z)) = g_n(z)$ for all $z\in \X\setminus U_n$.

We still have to verify that $U_{n+1}\subseteq U_n$, $V_{n+1}\subseteq V_n$ and $W_{n+1}\subseteq W_n$, so that the sequences $\{U_i\}_{0\leq i \leq n+1}$, $\{V_i\}_{0\leq i \leq n+1}$ and $\{W_i\}_{0\leq i \leq n+1}$ are decreasing. Notice that by construction, we have 
\[W_{n+1}\subseteq V_{n+1} \subseteq W_n \subseteq V_n,\]
so it remains only to check that $U_{n+1}\subseteq U_n$. However, it follows from condition \ref{item:ConditionDoNotChangeOutsideU} that $g_{n+1}(U_n) = V_n$, so $g_{n+1}(U_{n+1})\subseteq g_{n+1}(U_n)$, which implies that $U_{n+1}\subseteq U_n$.

Therefore, starting with $U_0=V_0=W_0=\X$ and $g_0=1$, we can construct by induction infinite collections $\{U_i\}_{i\in \NN}$, $\{V_i\}_{i\in \NN}$, $\{W_i\}_{i\in \NN}$ and $\{g_i\}_{i\in \NN}$ satisfying the above properties. By condition \ref{item:ConditionUibasis}, $\{U_i\}_{i\in \NN}$ is a decreasing basis of neighbourhood of $x$, and $\{V_i\}_{i\in \NN}$ is a decreasing basis of neighbourhood of $y$ by condition \ref{item:ConditionVibasis}. Furthermore, we have $g_i(U_i) = V_i$ for all $i\in \NN$ by condition \ref{item:ConditionV=gU} and $g_{i+1}(z)=g_i(z)$ for all $z\in \X\setminus U_i$ by condition \ref{item:ConditionDoNotChangeOutsideU}. Therefore, by Lemma \ref{lemma:LimitOfHomeomorphismsIsHomeomorphismIfThingsAreNice}, the sequence $\{g_n\}_{n\in \NN}$ converges pointwise to a homeomorphism $f\colon \X \rightarrow \X$ such that $f\St_G^{0}(x)f^{-1}=\St_G^0(y)$.
\end{proof}

\begin{rem}
Note that if the rigid stabilisers do not act locally minimally, then the conclusion of Theorem \ref{thm:NeighbourhoodStabilisersAreIsomorphic} is not true in general, as we will see in Proposition \ref{prop:NonIsomorphicStabilisers}.
\end{rem}

%In particular, for regular points, we directly get that the stabilisers are isomorphic.

%\begin{cor}\label{cor:StabilisersRegularPointsIsomorphic}
%Let $G$ and $\X$ be as in Theorem \ref{thm:NeighbourhoodStabilisersAreIsomorphic}. Then, for all regular points $x,y\in \X$, we have $\St_G(x)\cong \St_G(y)$.
%\end{cor}

%In light of Corollary \ref{cor:StabilisersRegularPointsIsomorphic}, one might wonder if the stabiliser of a regular point can be isomorphic to the stabiliser of a singular point. In the next proposition, we show that this cannot happen if $\X$ is compact.

%In Theorem \ref{thm:NeighbourhoodStabilisersAreIsomorphic}, we show that the neighbourhood stabilisers of any two points are isomorphic by exhibiting a homeomorphism of $\X$ that conjugates them. As we will see in the next proposition, when $\X$ is compact, any isomorphism between neighbourhood stabilisers must be of this form.
In the preceding theorem, we established that the neighbourhood stabilisers of any two points are isomorphic by exhibiting a homeomorphism of $\X$ that conjugates them. As we will see in the next proposition, by using a theorem of Rubin, we can show that when $\X$ is compact, any isomorphism between neighbourhood stabilisers must be of this form.

\begin{prop}\label{prop:IsomorphismsComeFromHomeos}
Let $G$ be a group of homeomorphisms of a compact Hausdorff space $\X$. Let us also suppose that the action of $G$ on $\X$ is minimal and that for every open set $U\subseteq \X$, the action of $\rist_G(U)$ on $U$ is locally minimal. Let $x_1,x_2\in \X$ be two points and let $G_1,G_2\leq G$ be two subgroups such that $\St_G^0(x_i) \leq G_i\leq \St_G(x_i)$ for $i=1,2$. Then, for every isomorphism $\varphi\colon G_1\rightarrow G_2$, there exists a homeomorphism $f\colon \X\rightarrow \X$ such that $f(x_1)=x_2$ and $\varphi(g) = f\circ g \circ f^{-1}$ for all $g\in G_1$. In particular, $\varphi(\St_G^0(x_1))=\St_G^0(x_2)$.
\end{prop}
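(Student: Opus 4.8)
The plan is to invoke Rubin's reconstruction theorem. The key observation is that the hypotheses guarantee that the action of each $G_i$ on $\X$ is, in an appropriate sense, rich enough to be reconstructed from the abstract group structure. Specifically, since $\St_G^0(x_i) \leq G_i$, the group $G_i$ contains all elements of $G$ that act trivially on a neighbourhood of $x_i$; combined with the micro-supported, locally minimal action of the rigid stabilisers, this should allow me to verify that $G_i$ acts on $\X \setminus \{x_i\}$ (or on $\X$ itself) as a group satisfying Rubin's hypotheses on the open set $\X \setminus \{x_i\}$.

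First I would identify the correct space on which to apply Rubin's theorem. Each $G_i$ fixes $x_i$, so the natural candidate is the action of $G_i$ on $\X \setminus \{x_i\}$, which is a locally compact Hausdorff space. I would check that this action is \emph{expansive} (or satisfies whatever regularity condition Rubin's theorem requires — typically that the action has no global fixed points on the relevant space and that rigid stabilisers of open sets act without fixed points, i.e. the action is locally dense or micro-supported in Rubin's sense). The containment $\St_G^0(x_i) \leq G_i$ is what provides the abundance of elements supported in small neighbourhoods away from $x_i$: given any nonempty open $V \subseteq \X \setminus \{x_i\}$, the local minimality of $\rist_G(V)$ on $V$ furnishes nontrivial elements of $G_i$ supported in $V$, which is precisely the micro-support condition Rubin needs.

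Next, given the isomorphism $\varphi\colon G_1 \to G_2$, Rubin's theorem produces a homeomorphism $f_0 \colon \X \setminus \{x_1\} \to \X \setminus \{x_2\}$ such that $\varphi(g) = f_0 \circ g \circ f_0^{-1}$ on $\X \setminus \{x_1\}$ for all $g \in G_1$. I would then extend $f_0$ to a homeomorphism $f \colon \X \to \X$ by setting $f(x_1) = x_2$. Here I would use compactness of $\X$ together with the minimality of the $G$-action to argue that $f_0$ extends continuously across the missing points: since $\X$ is compact Hausdorff and $f_0$ is a homeomorphism of the punctured spaces conjugating the group actions, the unique point $x_1$ must map to $x_2$, and the extension is automatically a homeomorphism. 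The final conjugation identity then holds on all of $\X$ by continuity. Applying the same reasoning to $\varphi^{-1}$ (or simply noting $f$ is already shown to be a homeomorphism) yields that $\varphi(\St_G^0(x_1)) = \St_G^0(x_2)$, since $\St_G^0(x_i)$ is exactly the subgroup of $G_i$ consisting of elements acting trivially near the fixed point, a property preserved by the topological conjugation $f$.

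The main obstacle I anticipate is verifying the precise hypotheses of Rubin's theorem for the groups $G_i$ rather than for $G$ itself: although $G$ acts minimally with locally minimal rigid stabilisers, the subgroup $G_i$ fixes $x_i$ and so does not act minimally on $\X$. The crucial technical point will be to show that $G_i$ nonetheless inherits enough micro-support on $\X \setminus \{x_i\}$ — this is where the assumption $\St_G^0(x_i) \leq G_i$ does the essential work, since it guarantees $G_i \supseteq \rist_G(V)$ for every open $V$ whose closure avoids $x_i$. A secondary subtlety is ensuring the extension of $f_0$ across the punctures is well-defined and sends $x_1$ to $x_2$ rather than identifying the puncture with some other point; here the uniqueness of the continuous extension, guaranteed by compactness and the density of orbits, is what closes the argument.
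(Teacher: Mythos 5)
Your proposal follows essentially the same route as the paper's proof: puncture the space at $x_i$, use the containment $\St_G^0(x_i)\leq G_i$ together with the locally minimal rigid stabilisers to verify Rubin's local-density hypothesis for $G_i$ acting on the locally compact Hausdorff space $\X\setminus\{x_i\}$, apply Rubin's theorem to get a conjugating homeomorphism of the punctured spaces, and extend it over the punctures via the one-point (Alexandroff) compactification, which is exactly $\X$. The only difference is one of polish: where you leave Rubin's exact hypothesis and the extension step slightly vague, the paper pins them down (orbit of each point dense in some neighbourhood, and functoriality of the Alexandroff compactification), but the substance is the same.
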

\begin{proof}
Let us consider $\X_1 = \X\setminus \{x_1\}$ and $\X_2=\X\setminus \{x_2\}$. These are two locally compact Hausdorff spaces, and as $\X$ is compact, it can naturally be seen as their Alexandroff compactification (also known as the one-point compactification).

For $i\in \{1,2\}$, since $G_i\leq \St_G(x_i)$, the action of $G_i$ on $\X$ naturally restricts to an action on $\X_i$. Let $U\subseteq \X_i$ be any open set and let $z\in U$ be any point of $U$. As $\X$ is Hausdorff, there exist two open sets $V,V'$ of $\X$ such that $z\in V$, $x_i\in V'$ and $V\cap V' = \emptyset$. Using the fact that $\X_i$ is open in $\X$, we can assume without loss of generality that $V\subseteq U$. Then, by our assumptions, there exists a neighbourhood $W\subseteq V$ of $z$ in $\X$ such that the orbit of $z$ under the action of $\rist_{G}(V)$ is dense in $W$. As $V\cap V'=\emptyset$, we have $\rist_G(V) \leq \St_G^0(x_i)\leq G_i$, and since $V\subseteq U$, we thus have $\rist_G(V)\leq \rist_{G_i}(U)$. Therefore, the orbit of $z$ under $\rist_{G_i}(U)$ is dense in $W$. It then follows from a theorem of Rubin (Theorem 3.1 in \cite{Rubin96}) that there exists a homeomorphism $f'\colon \X_1\rightarrow \X_2$ such that $\varphi(g) = f'\circ g \circ f'^{-1}$ for all $g\in G_i$. By the functoriality of the Alexandroff compactification, we can extend $f'$ to a homeomorphism $f\colon \X\rightarrow \X$ sending $x_1$ to $x_2$. This proves the first part of the claim, and the second part then immediately follows.

%To see that $\varphi(\St_G^0(x_1))=\St_G^0(x_2)$, it suffices to notice that if $g\in \St_G^0(x_1)$ acts trivially on a neighbourhood $U$ of $x_1$, then $\varphi(g)=f'\circ g \circ f'^{-1}$ acts trivially on $f(U)$, which is a neighbourhood of $x_2$.
\end{proof}

\begin{rem}
Note that Proposition \ref{prop:IsomorphismsComeFromHomeos} does not hold in general when $\X$ is not compact, as we will see in Example \ref{ex:ThompsonFHasWeirdIsomorphisms}.
\end{rem}

As a corollary of Proposition \ref{prop:IsomorphismsComeFromHomeos}, we obtain a simple criterion to distinguish two non-isomorphic stabilisers.
%Our Theorem \ref{thm:IsomorphismPreservesGerms} is then a direct corollary of Proposition \ref{prop:IsomorphismsComeFromHomeos}.

\begin{cor}\label{cor:IsomorphismGerms}
Let $G$ and $\X$ be as in Proposition \ref{prop:IsomorphismsComeFromHomeos}, and let $x,y\in \X$ be two points. If the groups of germs $\St_G(x)/\St_G^0(x)$ and $\St_G(y)/\St_G^0(y)$ of $x$ and $y$ are not isomorphic, then the stabilisers $\St_G(x)$ and $\St_G(y)$ are not isomorphic. In particular, if $x$ is regular and $y$ is singular, their stabilisers are not isomorphic.
\end{cor}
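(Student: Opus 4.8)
The goal is to prove Corollary \ref{cor:IsomorphismGerms}: if the germ groups $\St_G(x)/\St_G^0(x)$ and $\St_G(y)/\St_G^0(y)$ are not isomorphic, then $\St_G(x) \not\cong \St_G(y)$. My strategy is to argue the contrapositive: assume $\varphi\colon \St_G(x) \to \St_G(y)$ is an isomorphism, and then show it must carry $\St_G^0(x)$ onto $\St_G^0(y)$, so that it descends to an isomorphism of the quotient germ groups.

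The key observation is that Proposition \ref{prop:IsomorphismsComeFromHomeos} applies directly here. I'd set $x_1 = x$, $x_2 = y$, and take $G_1 = \St_G(x)$, $G_2 = \St_G(y)$. These satisfy the required sandwiching $\St_G^0(x_i) \leq G_i \leq \St_G(x_i)$ trivially, since $G_i = \St_G(x_i)$. The hypotheses on $G$ and $\X$ are inherited verbatim from Proposition \ref{prop:IsomorphismsComeFromHomeos}. So any isomorphism $\varphi\colon \St_G(x) \to \St_G(y)$ is implemented by a homeomorphism $f\colon \X \to \X$ with $f(x) = y$ and $\varphi(g) = fgf^{-1}$, and crucially the proposition's conclusion already records that $\varphi(\St_G^0(x)) = \St_G^0(y)$.

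Given this, the corollary is almost immediate. Since $\St_G^0(x)$ is normal in $\St_G(x)$ (noted in the introduction) and $\varphi$ restricts to an isomorphism $\St_G^0(x) \to \St_G^0(y)$, the map $\varphi$ induces an isomorphism on quotients $\St_G(x)/\St_G^0(x) \xrightarrow{\sim} \St_G(y)/\St_G^0(y)$. This contradicts the hypothesis that the germ groups are non-isomorphic, so no such $\varphi$ can exist, i.e.\ $\St_G(x) \not\cong \St_G(y)$. For the final sentence, I'd observe that if $x$ is regular then $\St_G(x)/\St_G^0(x)$ is trivial, whereas if $y$ is singular then $\St_G^0(y)$ is a proper subgroup of $\St_G(y)$, so its germ group is nontrivial; two groups of which one is trivial and the other is not cannot be isomorphic, and the main claim applies.

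**Where the work really lives.** There is essentially no obstacle in the corollary itself — it is a formal consequence of Proposition \ref{prop:IsomorphismsComeFromHomeos} together with the fact that an isomorphism carrying a normal subgroup onto a normal subgroup descends to the quotients. The only point requiring a moment's care is confirming that the proposition's hypotheses are met with the specific choice $G_i = \St_G(x_i)$, which they are. All the genuine difficulty (the invocation of Rubin's reconstruction theorem and the verification that $\rist_G$ acts with dense orbits locally on $\X_i$) has already been discharged inside the proof of Proposition \ref{prop:IsomorphismsComeFromHomeos}; the corollary simply harvests its conclusion.
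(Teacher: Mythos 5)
Your proposal is correct and follows exactly the paper's own argument: apply Proposition \ref{prop:IsomorphismsComeFromHomeos} with $G_1=\St_G(x)$, $G_2=\St_G(y)$ to conclude that any isomorphism of stabilisers carries $\St_G^0(x)$ onto $\St_G^0(y)$, hence descends to an isomorphism of the germ groups. The paper's proof is just a terser version of the same reasoning, leaving the regular/singular case implicit.
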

\begin{proof}
By Proposition \ref{prop:IsomorphismsComeFromHomeos}, if $\varphi\colon \St_G(x)\rightarrow \St_G(y)$ is an isomorphism, we have $\varphi(\St_G^0(x))=\St_G^0(y)$. Therefore, $\varphi$ projects to an isomorphism between $\St_G(x)/\St_G^0(x)$ and $\St_G(y)/\St_G^0(y)$.
\end{proof}

\section{Examples}\label{section:Examples}

In this section, we derive a few corollaries of Theorem \ref{thm:NeighbourhoodStabilisersAreIsomorphic} and Corollary \ref{cor:IsomorphismGerms}, which are simply direct applications of this result to various groups or classes of groups of interest.

\subsection{Topological full groups}
We begin by highlighting the fact that the results apply to topological full groups. Given a group $G$ of homeomorphisms of the Cantor space $\X$, the \emph{topological full group} of $G$ is the group $[G]$ of all homeomorphisms of $\X$ that are \emph{locally in $G$}, where a homeomorphism $h$ of $\X$ is said to be locally in $G$ if for every $x\in \X$, there exist a neighbourhood $U$ of $x$ and an element $g\in G$ such that $h|_U=g|_U$. For more information about these groups, see for instance \cite{Nekrashevych19}.

\begin{cor}\label{cor:TopologicalFullGroups}
Let $G$ be a group of homeomorphisms of the Cantor space $\X$ whose action is minimal, and let $[G]$ be the topological full group of this system. Then, for all $x,y\in \X$, there exists a homeomorphism $f\colon \X\rightarrow \X$ such that $f\St_{[G]}^{0}(x)f^{-1} = \St_{[G]}^0(y)$. In particular, if $x$ and $y$ are regular points for the action of $G$ on $\X$, then $\St_{[G]}(x)$ and $\St_{[G]}(y)$ are isomorphic.
\end{cor}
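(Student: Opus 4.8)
The plan is to verify that the pair $([G], \X)$ satisfies all the hypotheses of Theorem \ref{thm:NeighbourhoodStabilisersAreIsomorphic}, so that the desired homeomorphism $f$ is produced directly by that theorem. The space $\X$ is the Cantor set, which is compact, metrizable, and totally disconnected, hence in particular first-countable and Hausdorff, so the topological assumptions on $\X$ are immediate. It therefore remains to check the two dynamical hypotheses on the action of $[G]$: that it is minimal, and that for every open $U \subseteq \X$, the action of $\rist_{[G]}(U)$ on $U$ is locally minimal.

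For minimality, I would argue that the action of $[G]$ on $\X$ is minimal as soon as the action of $G$ is. This is because $[G] \supseteq G$, so every $G$-orbit is contained in the corresponding $[G]$-orbit; since each $G$-orbit is dense by hypothesis, so is each $[G]$-orbit, and hence the $[G]$-action is minimal. (Strictly, one should also note that $[G]$ consists of homeomorphisms of $\X$, which is built into the definition of the topological full group.)

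The main work, and the step I expect to be the genuine obstacle, is establishing local minimality of $\rist_{[G]}(U)$ on $U$. The key point is that the topological full group is saturated with ``local cut-and-paste'' homeomorphisms: because $\X$ is Cantor, every clopen subset admits abundant supported elements. Concretely, for a clopen set $K \subseteq U$ on which some $g \in G$ maps $K$ to a disjoint clopen $g(K)$, one can form the transposition-type element of $[G]$ that swaps $K$ and $g(K)$ via $g$ and fixes everything else; such an element lies in $\rist_{[G]}(U)$ whenever $K \cup g(K) \subseteq U$. The plan is to use the minimality of $G$ to find, for any point $z \in U$ and any two small clopen neighbourhoods of $z$ inside $U$, an element of $G$ carrying one into the other, then localise it to an element of $\rist_{[G]}(U)$ using the clopen basis of $\X$. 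This should show that $\rist_{[G]}(U)$ acts minimally on each clopen piece of $U$, and since $\X$ is Cantor, $U$ decomposes as a disjoint union of such clopen pieces, yielding exactly the local minimality required. The delicate part is bookkeeping the supports so that the constructed elements genuinely fix $\X \setminus U$ and the orbit of $z$ stays dense in a whole neighbourhood.

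Once both hypotheses are verified, Theorem \ref{thm:NeighbourhoodStabilisersAreIsomorphic} applies verbatim and produces, for any $x, y \in \X$, a homeomorphism $f \colon \X \to \X$ with $f\St_{[G]}^{0}(x)f^{-1} = \St_{[G]}^{0}(y)$. For the final sentence, I would observe that regular points of the $G$-action are also regular for $[G]$ — indeed, since $[G]$ is locally defined by $G$, having $\St_{[G]}^0(x) = \St_{[G]}(x)$ follows from the corresponding statement for $G$ together with the local nature of the full group — so that Corollary \ref{cor:StabilisersRegularPointsIsomorphic} gives $\St_{[G]}(x) \cong \St_{[G]}(y)$ whenever $x$ and $y$ are regular.
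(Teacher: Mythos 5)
Your proposal is correct and follows essentially the same route as the paper: it verifies the hypotheses of Theorem \ref{thm:NeighbourhoodStabilisersAreIsomorphic} by combining minimality of $G$ with the transposition-type (cut-and-paste) elements of $[G]$ supported in $U$ to show that $\rist_{[G]}(U)$ acts minimally on $U$, and then passes regularity of points from $G$ to $[G]$ to conclude for stabilisers of regular points. The paper's proof uses exactly this swap construction (an $h\in[G]$ equal to $g$ on a clopen $V$, to $g^{-1}$ on $g(V)$, and the identity elsewhere), so your plan matches it in all essentials.
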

\begin{proof}
Let $U\subseteq \X$ be an open set. Since the action of $G$ on $\X$ is minimal, the action of $\rist_{[G]}(U)$ on $U$ is also minimal. Indeed, for every $x\in U$ and for every open set $W\subseteq U$, there exists some $g\in G$ such that $g(x)\in W$ by minimality of the action of $G$. It follows that there exists a clopen set $V$ contained in $U$ such that $V\cap g(V) = \emptyset$ and $g(V)\subseteq W$. Thus, we have some $h\in [G]$ such that $h|_V=g|_V$, $h_{g(V)}=g^{-1}|_{g(V)}$ and $h$ acts as the identity outside of $V\cup g(V)$. Since $V\cup g(V) \subseteq U$, we have $h\in \rist_{[G]}(U)$. We have just shown that for every $x\in U$ and every open set $W\subseteq U$, there exists $h\in \rist_{[G]}(U)$ such that $g(x)\in W$. Therefore, the action of $\rist_{[G]}(U)$ on $U$ is minimal. The first part of the claim then follows directly from Theorem \ref{thm:NeighbourhoodStabilisersAreIsomorphic}.

To prove the second part, it suffices to observe that if $x$ is a regular point for the action of $G$ on $\X$, then it is also regular for the action of $[G]$, since by definition, for every $g\in [G]$, there exist $h\in G$ and a neighbourhood $U\subseteq \X$ of $x$ such that $h|_U=g|_U$.
\end{proof}

Notice that in particular, for a minimal action of $\ZZ$ on the Cantor space, the stabilisers in the topological full group of any two points are always isomorphic.

\begin{cor}
Let $(\X, \varphi)$ be a Cantor minimal system, meaning that $\varphi\colon \X \rightarrow \X$ is a homeomorphism of the Cantor space $\X$ such that the group $\langle \varphi \rangle$ acts minimally on $\X$, and let $[\varphi]$ be the topological full group of this system. Then, for all $x,y\in \X$, there exists a homeomorphism $f\colon \X\rightarrow \X$ such that $f\St_{[\varphi]}(x)f^{-1} = \St_{[\varphi]}(y)$.
\end{cor}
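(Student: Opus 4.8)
The plan is to deduce this from Corollary~\ref{cor:TopologicalFullGroups} applied to the group $G=\langle\varphi\rangle$, whose action on $\X$ is minimal by hypothesis. That corollary immediately produces a homeomorphism $f\colon\X\to\X$ with $f\St_{[\varphi]}^{0}(x)f^{-1}=\St_{[\varphi]}^{0}(y)$, so the only thing left is to upgrade the neighbourhood stabilisers to the full stabilisers. In other words, I would show that every point of $\X$ is regular for the action of $[\varphi]$, that is, $\St_{[\varphi]}(x)=\St_{[\varphi]}^{0}(x)$ for all $x\in\X$; the desired conclusion then follows simply by conjugating.

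The key observation driving regularity is that a minimal homeomorphism of an infinite compact space has no periodic points. Indeed, were $x$ periodic of period $n\geq 1$, its finite orbit would be a nonempty closed $\varphi$-invariant subset of $\X$, hence dense by minimality, and therefore equal to all of $\X$, forcing $\X$ to be finite---which is impossible for the Cantor space. Consequently $\varphi^{n}(x)=x$ can hold only when $n=0$.

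With this in hand, the regularity argument is short. Given $g\in\St_{[\varphi]}(x)$, the definition of the topological full group provides a neighbourhood $U$ of $x$ and an integer $n\in\ZZ$ with $g|_{U}=\varphi^{n}|_{U}$. Evaluating at $x$ yields $\varphi^{n}(x)=g(x)=x$, whence $n=0$ by the previous paragraph, so $g$ agrees with $\id$ on $U$ and therefore lies in $\St_{[\varphi]}^{0}(x)$. Since the reverse inclusion always holds, we obtain $\St_{[\varphi]}(x)=\St_{[\varphi]}^{0}(x)$, and the same reasoning applies at $y$. Combining this with the homeomorphism $f$ furnished by Corollary~\ref{cor:TopologicalFullGroups} gives $f\St_{[\varphi]}(x)f^{-1}=f\St_{[\varphi]}^{0}(x)f^{-1}=\St_{[\varphi]}^{0}(y)=\St_{[\varphi]}(y)$, as required.

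I do not expect a genuine obstacle here: essentially all the content is the no-periodic-points fact together with the identification of the germ of any element of $[\varphi]$ with the germ of a power of $\varphi$, which is immediate from the definition. The single point worth stating carefully is that $\X$ is infinite (as the Cantor space certainly is), since this is precisely what makes minimality incompatible with periodicity and hence forces every point to be regular.
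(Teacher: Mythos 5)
Your proof is correct and follows essentially the same route as the paper: the paper's proof is the one-line observation that a minimal $\ZZ$-action on an infinite space is free, hence every point is regular, so the result follows from Corollary~\ref{cor:TopologicalFullGroups}. You have merely expanded the same two ingredients --- the no-periodic-points argument and the upgrade from neighbourhood stabilisers to full stabilisers via regularity --- in full detail, which is fine.
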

\begin{proof}
Since a minimal action of $\ZZ$ on an infinite space must be free, every point of $\X$ is a regular point for the action of $\langle \varphi \rangle$, and the result thus follows directly from Corollary \ref{cor:TopologicalFullGroups}.
\end{proof}

\subsection{Thompson groups}
Our results also apply to Thompson's groups $F$, $T$ and $V$, which are important examples of groups of homeomorphisms of the interval $[0,1]$, the circle and the Cantor space, respectively. For the definition and more information about these groups, see for instance \cite{CannonFloydParry96}.

\begin{cor}
Let $G$ be either Thompson's group $F$, $T$ or $V$ with its usual action on $\X$, where $\X$ is $(0,1)$, $S^1$ or the Cantor space, respectively. Then, for all $x,y\in \X$, there exists a homeomorphism $f\colon \X \rightarrow \X$ such that $f\St_G^0(x)f^{-1} = \St_G^0(y)$.
\end{cor}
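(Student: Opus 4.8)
The plan is to deduce all three cases directly from Theorem~\ref{thm:NeighbourhoodStabilisersAreIsomorphic} by checking that each action satisfies its hypotheses. The spaces $(0,1)$, $S^1$ and the Cantor space are all metrizable, hence first-countable and Hausdorff, so the only genuine work is to verify that the action of $G$ is minimal and that, for every open set $U\subseteq\X$, the action of $\rist_G(U)$ on $U$ is locally minimal. Once these two points are established, the existence of the desired homeomorphism $f$ with $f\St_G^0(x)f^{-1}=\St_G^0(y)$ is immediate.

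Minimality is standard in each case, and I would recall it as follows. The groups $F$, $T$ and $V$ act transitively, respectively, on the standard dyadic subintervals of $(0,1)$, on the standard dyadic arcs of $S^1$, and on the cylinder sets of the Cantor space. Consequently, given any point $z\in\X$ and any nonempty basic open set $B$ (a dyadic interval, arc, or cylinder), one can produce an element $g\in G$ with $g(z)\in B$, since it suffices to map a small basic neighbourhood of $z$ onto a basic subpiece of $B$. As these basic sets form a basis of the topology, every orbit is dense, so the action is minimal.

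The heart of the matter is the local minimality of the rigid stabilisers, and here I would exploit the self-similar structure of the Thompson groups. For a standard dyadic interval, arc, or cylinder $B$, there is a canonical rescaling homeomorphism identifying $B$ with the whole space $\X$ and conjugating $\rist_G(B)$, acting on $B$, onto a copy of a Thompson group acting minimally on its natural domain; concretely, $\rist_F(B)$ and $\rist_V(B)$ look like $F$ and $V$ on their full spaces, while for $T$ one must take $B$ to be a \emph{proper} subarc, so that $\rist_T(B)$ behaves like $F$ supported on an interval. In every case $\rist_G(B)$ acts minimally on $B$. Now fix an open set $U$ and a point $x\in U$. Since the basic sets form a basis, there is a basic set $B$ with $x\in B\subseteq U$, whence $\rist_G(B)\leq\rist_G(U)$. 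Because $\rist_G(B)$ already acts minimally on $B$ and fixes $\X\setminus B$ pointwise, the orbit of any $y\in B$ under $\rist_G(U)$ meets $B$ in a dense subset of $B$. Taking $W=B$ as the required neighbourhood then witnesses local minimality of the action of $\rist_G(U)$ on $U$.

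I expect the main obstacle to lie precisely in this last step: a general open set $U$ need not be connected, and $\rist_G(U)$ cannot move points across the components of $U$, so for $F$ and $T$ the action of $\rist_G(U)$ on all of $U$ is typically not minimal. The reduction to dyadic pieces is exactly what reconciles this with the hypothesis of Theorem~\ref{thm:NeighbourhoodStabilisersAreIsomorphic}, which demands only \emph{local} minimality. With both hypotheses verified, Theorem~\ref{thm:NeighbourhoodStabilisersAreIsomorphic} applies verbatim and yields the claimed homeomorphism $f$ for all $x,y\in\X$.
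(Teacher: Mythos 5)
Your proposal is correct and takes essentially the same route as the paper: the paper's proof also just checks the hypotheses of Theorem \ref{thm:NeighbourhoodStabilisersAreIsomorphic} (citing Lemma 4.2 of \cite{CannonFloydParry96} for $F$, deducing $T$ from $F\leq T$, and calling the case of $V$ elementary) and then applies the theorem, while you supply the dyadic self-similarity details explicitly. Two cosmetic points to tidy up in your verification: elements of $F$ fix $0$ and $1$, so $F$ is transitive only on dyadic intervals avoiding the endpoints, and at a dyadic rational $x$ every small standard dyadic interval has $x$ as an endpoint, so to witness local minimality at such $x$ you should take $W=(a,b)$ with $a,b$ dyadic (the rigid stabiliser of such an interval is still a conjugate copy of $F$ acting minimally).
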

\begin{proof}
In all three cases, the space $\X$ is a first countable Hausdorff space, and it is known that the action of $G$ on $\X$ is minimal and that for every open set $U\subseteq \X$, the action of $\rist_G(U)$ on $U$ is locally minimal (for $F$, this follows from Lemma 4.2 in \cite{CannonFloydParry96}, for $T$, it can be deduced easily from the fact that $F\leq T$, and for $V$, the proof is elementary). We can thus apply Theorem \ref{thm:NeighbourhoodStabilisersAreIsomorphic}.
\end{proof}

In particular, for Thompson's group $F$, if $x,y\in (0,1)$ are irrational points, then $\St_F(x)$ is isomorphic to $\St_F(y)$, since irrational points are regular points for the action of $F$. Thus, we can recover a part of the results of Golan and Sapir on the stabilisers of $F$ obtained in \cite{GolanSapir17}.

However, we cannot use Corollary \ref{cor:IsomorphismGerms} to distinguish between the stabilisers of irrational points and those of rational points, since the space $(0,1)$ is not compact. In fact, for the action of $F$ on $(0,1)$, the conclusions of Proposition \ref{prop:IsomorphismsComeFromHomeos} do not hold, as the next example shows.

\begin{example}\label{ex:ThompsonFHasWeirdIsomorphisms}
Let $\varphi\colon \St_F(\frac{1}{2}) \rightarrow \St_F(\frac{1}{2})$ be the map given by
\[\varphi(g)(x) = \begin{cases}
g(x+\frac{1}{2})-\frac{1}{2} & \text{ if } x\in[0,\frac{1}{2}]\\
g(x-\frac{1}{2})+\frac{1}{2} & \text{ if } x\in [\frac{1}{2}, 1].
\end{cases}\]
One can readily check that this map is a well-defined isomorphism. However, this isomorphism cannot be expressed as the conjugation by a homeomorphism of $(0,1)$. Indeed, if $g\in \St_F^0(\frac{1}{2})$ is an element that fixes no point in the interval $(0, d)$ for some $0<d<\frac{1}{2}$, then we see from the formula that $\varphi(g)$ fixes no point in the interval $(\frac{1}{2},\frac{1}{2}+d)$ and so does not belong to $\St_F^0(\frac{1}{2})$. In fact, one can show that $\varphi(\St_F^0(\frac{1}{2})) = \St_{[F,F]}(\frac{1}{2})$. As $\varphi(\St_F^0(\frac{1}{2}))\ne \St_F^0(\frac{1}{2})$, we conclude that $\varphi$ cannot be given by the conjugation by a homeomorphism.
\end{example}

\subsection{Branch groups}
Another class of groups to which our results apply is the class of branch groups, which are a special class of groups of automorphisms of rooted trees. We refer the interested reader to \cite{BartholdiGrigorchukSunic03} for a definition and more information about these groups.

\begin{cor}\label{cor:BranchGroups}
Let $G$ be a branch group acting on an infinite, locally finite, spherically homogeneous rooted tree $T$, and let $\partial T$ be the boundary of the tree $T$, which is homeomorphic to the Cantor set. Then, for all $x,y\in \partial T$, there exists an automorphism $f\colon T\rightarrow T$ of the rooted tree $T$ such that $f\St_G^0(x)f^{-1} = \St_G^0(y)$.
\end{cor}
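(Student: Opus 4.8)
The plan is to apply Theorem \ref{thm:NeighbourhoodStabilisersAreIsomorphic} to the action of $G$ on $\partial T$ and then to upgrade the resulting homeomorphism of $\partial T$ to an automorphism of the tree. Recall that $\partial T$, being homeomorphic to the Cantor set, is a first-countable Hausdorff space, and that a basis of clopen sets is given by the cylinders $\partial T_v$ (the sets of rays passing through a vertex $v$), for which the rigid stabiliser $\rist_G(\partial T_v)$ coincides with the rigid stabiliser $\rist_G(v)$ of the vertex $v$. Since $T$ is locally finite, each level of $T$ is finite, so there are only finitely many cylinders at each level and they partition $\partial T$. To invoke the theorem I must check that the action of $G$ on $\partial T$ is minimal and that $\rist_G(U)$ acts locally minimally on $U$ for every open $U\subseteq\partial T$.

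Minimality is immediate: a branch group acts level-transitively (spherically transitively) on $T$ by definition, so for any ray $\xi$ and any vertex $v$ some element of $G$ sends the level-$|v|$ vertex of $\xi$ to $v$; hence the orbit of $\xi$ meets every cylinder and is therefore dense. For local minimality, I would reduce everything to the following claim: for every vertex $v$, the action of $\rist_G(v)$ on the cylinder $\partial T_v$ is minimal. Granting this, let $U\subseteq\partial T$ be open and $x\in U$; since cylinders form a basis, choose a vertex $w$ with $x\in\partial T_w\subseteq U$. As $\rist_G(w)$ fixes the complement of $\partial T_w$ pointwise, the $\rist_G(w)$-orbit of any $y\in\partial T_w$ stays inside $\partial T_w$ and is dense there; since $\partial T_w\subseteq U$ forces $\rist_G(w)\leq\rist_G(U)$, the set $\partial T_w$ is a neighbourhood of $x$ in which every $\rist_G(U)$-orbit is dense, which is precisely local minimality at $x$.

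The claim that $\rist_G(v)$ acts minimally on $\partial T_v$ is equivalent to the statement that $\rist_G(v)$ acts level-transitively on the subtree $T_v$, and this is the crux of the argument and the step I expect to be the main obstacle. The setwise stabiliser $\St_G(v)$ is easily seen to be level-transitive on $T_v$, since two descendants of $v$ at the same level share the ancestor $v$, so any element of $G$ carrying one to the other must fix $v$; however, passing from $\St_G(v)$ to the rigid stabiliser $\rist_G(v)$ cannot be done with finite-index bookkeeping alone and genuinely requires the branch hypothesis that $\rist_G(n)=\prod_{|u|=n}\rist_G(u)$ has finite index in $G$ for every $n$. This is exactly where the defining structure of branch groups enters, and I would invoke the standard fact that in a branch group the rigid stabiliser of any vertex acts level-transitively on the corresponding subtree (see \cite{BartholdiGrigorchukSunic03}) rather than reprove it here.

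With the hypotheses verified, Theorem \ref{thm:NeighbourhoodStabilisersAreIsomorphic} produces a homeomorphism $f\colon\partial T\to\partial T$ in the Ellis semigroup of $G$ with $f\St_G^0(x)f^{-1}=\St_G^0(y)$, and it remains to see that $f$ is induced by a tree automorphism. Being in the Ellis semigroup, $f$ is a pointwise limit of elements $g_i\in G$, each of which permutes the finitely many level-$n$ cylinders. Fixing $n$ and two rays $\xi,\eta$ lying in a common level-$n$ cylinder, the points $g_i(\xi)$ and $g_i(\eta)$ lie in a common level-$n$ cylinder for every $i$; since these cylinders are clopen and finite in number, for large $i$ the cylinder of $g_i(\xi)$ is that of $f(\xi)$ and likewise for $\eta$, so $f(\xi)$ and $f(\eta)$ share a level-$n$ cylinder. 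Hence $f$ maps each level-$n$ cylinder into a single level-$n$ cylinder, and because $f$ is a bijection of the partitioned space $\partial T$, a counting argument forces $f$ to map level-$n$ cylinders bijectively onto level-$n$ cylinders, compatibly with inclusion as $n$ varies. This bijection of cylinders preserves the ancestor relation, so it is induced by a unique automorphism of $T$, which is the desired $f$ conjugating $\St_G^0(x)$ onto $\St_G^0(y)$.
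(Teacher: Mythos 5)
Your reduction of local minimality to the statement ``$\rist_G(v)$ acts minimally on $\partial T_v$'' is logically fine, but the statement itself --- which you correctly identify as the crux --- is \emph{false} for branch groups in general, and no such fact appears in \cite{BartholdiGrigorchukSunic03}. The branch condition only asserts that $\prod_{|u|=n}\rist_G(u)$ has finite index in $G$, and a finite-index subgroup of a level-transitive group need not be level-transitive. The paper's own flagship example already defeats the claim: in the first Grigorchuk group $\mathcal{G}$, the rigid stabiliser of a first-level vertex $v$ acts on the subtree $T_v$ through the subgroup $B=\langle b\rangle^{\mathcal{G}}$ of $\mathcal{G}$ (the classical computation gives $\rist_{\mathcal{G}}(1)=B\times B$, each factor acting on the corresponding subtree). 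Since $b$ fixes the first level of the tree and the level stabiliser $\St_{\mathcal{G}}(1)$ is normal, we have $B\leq \St_{\mathcal{G}}(1)$; hence the action of $\rist_{\mathcal{G}}(v)$ on $T_v$ fixes both children of $v$, so it is not even transitive on the first level of $T_v$, let alone level-transitive, and its action on $\partial T_v$ is not minimal. So your verification of the hypotheses of Theorem \ref{thm:NeighbourhoodStabilisersAreIsomorphic} collapses at exactly the step you flagged as the main obstacle.

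What is true, and what the paper proves, is the weaker \emph{local} minimality --- which is all Theorem \ref{thm:NeighbourhoodStabilisersAreIsomorphic} requires, and is precisely why that theorem was formulated with locally minimal rigid stabilisers. Given an open set $U$ and $x\in U$, choose a partition $\partial T=V_1\sqcup\dots\sqcup V_n$ into cylinders with $x\in V_1\subseteq U$; the branch property says $L=\prod_{i=1}^n\rist_G(V_i)$ has finite index in $G$. A finite-index subgroup of a group acting minimally on $\partial T$ acts locally minimally (its minimal components are finitely many clopen sets; the paper cites \cite{FrancoeurThesis19}, Lemma 9.2.12, for this), so there is a clopen $W\subseteq V_1$ containing $x$ on which $L$ acts minimally. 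Since the factors $\rist_G(V_i)$ with $i\geq 2$ fix $V_1$ pointwise, the $L$-orbits of points of $W$ coincide with their $\rist_G(V_1)$-orbits, so $\rist_G(V_1)\leq\rist_G(U)$ already acts minimally on $W$; this gives local minimality of $\rist_G(U)$ on $U$. Your final paragraph, upgrading the Ellis-semigroup limit $f$ to a tree automorphism, is correct and is essentially the paper's argument in combinatorial form (the paper phrases it metrically: a pointwise limit of isometries of $\partial T$ is an isometry, and isometries of $\partial T$ are induced by automorphisms of $T$).
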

\begin{proof}
It follows directly from the definition of a branch group that the action of $G$ on $\partial T$ is minimal. Furthermore, for all open set $U\subseteq \partial T$ and for all $x\in U$, there exists a finite partition $V_1\sqcup V_2 \sqcup \dots \sqcup V_n$ of $\partial T$ into clopen subsets such that $x\in V_1\subseteq U$ and $\prod_{i=1}^n\rist_G(V_i)$ has finite index in $G$. This implies\footnote{An explicit proof of this fact for groups acting on rooted trees can be found for instance in \cite{FrancoeurThesis19}, Lemma 9.2.12, but it could be proved in greater generality.} that there exists some clopen set $W\subseteq V_1$ containing $x$ such that $\prod_{i=1}^n\rist_G(V_i)$, and therefore $\rist_G(V_1) \leq \rist_G(U)$, act minimally on $W$. Therefore, the action of $G$ on $\partial T$ satisfies the hypotheses of Theorem \ref{thm:NeighbourhoodStabilisersAreIsomorphic}, so for every $x,y\in \partial T$ there exists a homeomorphism $f\colon \partial T \rightarrow \partial T$ such that $f\St_G^{0}(x)f^{-1}=\St_G^{0}(y)$.

To conclude, it remains only to see that the homeomorphism $f$ is in fact induced by an automorphism of the rooted tree $T$. However, by Theorem \ref{thm:NeighbourhoodStabilisersAreIsomorphic}, $f$ belongs to the Ellis semigroup of $G$, and since the action of $G$ on $\partial T$ is by isometries, $f$ must also be an isometry. It then follows from the definition of the metric on $\partial T$ that $f$ comes from an automorphism of $T$.
\end{proof}

Thanks to Corollaries \ref{cor:BranchGroups} and \ref{cor:IsomorphismGerms}, we can obtain a complete classification, up to isomorphism, of stabilisers of points in many branch group. As an illustration, we give below such a classification for the first Grigorchuk group.

\begin{example}
Let $X=\{0,1\}$ be a set of two elements and $X^{\NN}$ be the set of all right-infinite words on $X$. Let $\mathcal{G}$ be the first Grigorchuk group (see \cite{Grigorchuk83}, or \cite{BartholdiGrigorchukSunic03} Section 1.6.1) with its standard action on $X^{\NN}$. The set of singular points for the action of $\mathcal{G}$ on $X^{\NN}$ is the set of right-infinite words that are cofinal with the constant sequence $1^{\NN}$, which is precisely the orbit of $1^{\NN}$ under $\mathcal{G}$. Therefore, there are exactly two isomorphism classes for stabilisers in $\mathcal{G}$ of elements of $X^{\NN}$. More precisely, given $x,y\in X^{\NN}$, we have $\St_{\mathcal{G}}(x)\cong \St_{\mathcal{G}}(y)$ if and only if $x$ and $y$ are both cofinal with or both not cofinal with $1^{\NN}$.
\end{example}

\subsection{Groups acting on trees with almost prescribed local actions}

We conclude with a last class of examples, which shows that our assumptions in Theorem \ref{thm:NeighbourhoodStabilisersAreIsomorphic} are necessary for the conclusion to hold. This was pointed out to us by Adrien Le Boudec.

Let $\Omega$ be a finite set of cardinality $d$, and let $F\leq F'\leq \Sym(\Omega)$ be two groups of permutations of $\Omega$. Given a $d$-regular tree $T_d$ with a colouring $c_E\colon E(T_d)\rightarrow \Omega$ of its edges by $\Omega$, we denote by $G(F,F')$ the group of all automorphisms of $T_d$ such that the induced permutations on $\Omega$ belong to $F'$ for all vertices of $T_d$ and to $F$ for all but finitely many vertices. These groups were defined by Le Boudec in \cite{LeBoudec16}, where he calls them groups acting on trees with almost prescribed local action. We refer the reader to that article for more information about these groups. Note that by \cite{LeBoudec16}, Corollary 3.8, $G(F,F')$ is finitely generated if $F$ acts freely on $\Omega$.

Let us first notice that for all $F\leq F'$, the group $G(F,F')$ acts minimally on $\partial T_d$, the boundary of the tree, which is homeomorphic to the Cantor set. If the action of $F'$ on $\Omega$ is sufficiently rich, then the actions of the rigid stabilisers are locally minimal.

\begin{example}
Let $F\leq F'\leq \Sym(\Omega)$ be two groups of permutations such that $F$ acts simply transitively on $\Omega$ and $F'$ acts $2$-transitively on $\Omega$. Then, for all open set $U\subseteq \partial T_d$, the action of $\rist_{G(F,F')}(U)$ on $U$ is locally minimal. Therefore, by Theorem \ref{thm:NeighbourhoodStabilisersAreIsomorphic}, for every $\xi,\eta\in \partial T_d$, we have $\St_{G(F,F')}^{0}(\xi)\cong \St_{G(F,F')}^{0}(\eta)$.
\end{example}

However, there are some groups $F'$ for which the rigid stabilisers do not act locally minimally, so the hypotheses of Theorem \ref{thm:NeighbourhoodStabilisersAreIsomorphic} are not satisfied, and in some of those cases, the conclusion of the theorem does not hold, as the next proposition shows.

\begin{prop}\label{prop:NonIsomorphicStabilisers}
Let $F\leq F' \leq \Sym(\Omega)$ be two groups of permutations of $\Omega$, with $F$ acting simply transitively on $\Omega$. Let us further assume that there exist $a,b, c\in \Omega$ such that $\St_{F'}(a)=\St_{F'}(b)$ but $\St_{F'}(a)\ne \St_{F'}(c)$. Then, there exist $\xi, \eta \in \partial T_d$ such that $\St^{0}_{G(F,F')}(\xi) \not\cong \St^{0}_{G(F,F')}(\eta)$.
\end{prop}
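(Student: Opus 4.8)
The plan is to obtain an explicit algebraic description of the neighbourhood stabiliser of an end, and then to choose two ends whose associated local data, encoded by the hypothesis on $F'$, are genuinely different.

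First I would fix an end $\xi\in\partial T_d$, represent it by a geodesic ray $(v_0,v_1,v_2,\dots)$, and write $\delta_n=c_E(\{v_n,v_{n+1}\})$ for the colour of the $n$-th edge along the ray (so $\delta_n\ne\delta_{n+1}$). Writing $C_m\subseteq\partial T_d$ for the cone of ends through $v_m$, these form a neighbourhood basis of $\xi$, and an element of $G(F,F')$ lies in $\St^0_{G(F,F')}(\xi)$ precisely when it fixes some $C_m$ pointwise; hence $\St^0_{G(F,F')}(\xi)=\bigcup_m \rist_{G(F,F')}(\partial T_d\setminus C_m)$, an increasing union of rigid stabilisers. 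The key structural point is to identify each $\rist_{G(F,F')}(\partial T_d\setminus C_m)$: such an element fixes the half-tree beyond $v_m$ and acts on the complementary rooted half-tree, where the only surviving local constraint sits at the vertex adjacent to the cone, at which the induced permutation must fix the colour $\delta_m$ and therefore ranges exactly over $\St_{F'}(\delta_m)$. Here the simple transitivity of $F$ is essential: since $\St_F(\omega)=1$ for every $\omega$, no element of $G(F,F')$ fixing $\xi$ can carry a nontrivial $F$-local action along the ray, so the subgroups $\St_{F'}(\delta_n)$ are exactly the data that survive. Consequently the isomorphism type of $\St^0_{G(F,F')}(\xi)$ is controlled by the tail of the sequence of subgroups $(\St_{F'}(\delta_n))_n$.

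Next I would use the hypothesis to choose the two ends. Since $\St_{F'}(a)=\St_{F'}(b)$ with $a\ne b$, I take $\eta$ along a ray whose edge colours alternate between $a$ and $b$ (legitimate, as consecutive colours need only differ), so that $\St_{F'}(\delta_n)=\St_{F'}(a)$ for every $n$. For $\xi$ I instead choose a ray, using $d\ge 3$, whose colours alternate $c$ with another colour, so that $\St_{F'}(\delta_n)=\St_{F'}(c)\ne\St_{F'}(a)$ for infinitely many $n$. Thus, along cofinal families of rigid stabilisers, $\St^0_{G(F,F')}(\eta)$ is built from the single subgroup $\St_{F'}(a)$, while $\St^0_{G(F,F')}(\xi)$ is built from a subgroup strictly different from $\St_{F'}(a)$ infinitely often. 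To finish, I would extract an isomorphism invariant recovering this tail of local data. The cleanest route is spatial: show that $\St^0_{G(F,F')}(\xi)$ acts locally densely on $\partial T_d\setminus\{\xi\}$ in the sense of Rubin, so that any isomorphism onto $\St^0_{G(F,F')}(\eta)$ is implemented by a homeomorphism $h\colon\partial T_d\to\partial T_d$ with $h(\xi)=\eta$ (as in the proof of Proposition \ref{prop:IsomorphismsComeFromHomeos}, but invoking only local density rather than local minimality). Such an $h$ intertwines the two micro-supported systems, and hence preserves, at points accumulating to $\xi$ and to $\eta$, the isomorphism types of the groups of germs of the two stabilisers; but near $\eta$ these are all governed by $\St_{F'}(a)$, whereas near $\xi$ they are governed by $\St_{F'}(c)$ infinitely often, which is the desired contradiction.

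I expect this last step to be the main obstacle. The difficulty is twofold: an abstract isomorphism need not respect the tree structure or the filtration by rigid stabilisers, so the subgroups $\St_{F'}(\delta_n)$ must be recovered in a genuinely isomorphism-invariant way; and invoking Rubin's theorem requires checking that $\St^0_{G(F,F')}(\xi)$ still acts locally densely even though the rigid stabilisers fail to act locally minimally — exactly the hypothesis of Theorem \ref{thm:NeighbourhoodStabilisersAreIsomorphic} that is unavailable here. Should local density also fail, one must instead isolate a purely algebraic invariant of the directed union $\bigcup_m\rist_{G(F,F')}(\partial T_d\setminus C_m)$, for example by recovering these rigid stabilisers as a canonical family of subgroups and reading off the permutation group $\St_{F'}(\delta_m)$ up to $F'$-conjugacy from germ data, in a manner insensitive to re-coordinatisation.
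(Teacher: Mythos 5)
Your structural analysis (writing $\St^0_{G(F,F')}(\xi)$ as the increasing union of the rigid stabilisers $\rist_{G(F,F')}(\partial T_d\setminus C_m)$, with the permutation induced at the vertex adjacent to the cone constrained to lie in $\St_{F'}(\delta_m)$) and your choice of the two ends both match the paper's setup. But the proof has a genuine gap exactly where you flag it, and the route you propose to close it cannot work. Rubin's theorem is not available here: local density does not merely "possibly fail", it fails unavoidably, and at a dense set of points. Concretely, let $\zeta$ be any end whose colour sequence is eventually alternating in $a,b$, and let $W$ be a small cone around $\zeta$ with boundary edge of colour $\delta$. An element of $\rist_{G(F,F')}(W)$ induces at the root of $W$ a permutation in $\St_{F'}(\delta)$, and the induced permutations at consecutive vertices of the ray must agree on the colour of the edge joining them. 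Since $\St_{F'}(a)$ fixes $b$ and $\St_{F'}(b)$ fixes $a$, once the image of the first edge of the ray towards $\zeta$ is chosen, the image of every subsequent edge is forced; hence the orbit $\rist_{G(F,F')}(W)\cdot\zeta$ is finite, and its closure has empty interior in the perfect space $\partial T_d$. Ends with eventually $(a,b)$-alternating colour sequences are dense, so the locally dense hypothesis of Rubin's theorem fails for both $\St^0(\xi)$ and $\St^0(\eta)$ acting on the punctured boundary --- indeed this failure is the very reason the proposition furnishes a counterexample to the conclusion of Theorem \ref{thm:NeighbourhoodStabilisersAreIsomorphic}. Your fallback (recover the subgroups $\St_{F'}(\delta_m)$ "in a genuinely isomorphism-invariant way") is a restatement of the problem rather than a solution: the assertion that the isomorphism type of the union is "controlled by the tail of $(\St_{F'}(\delta_n))_n$" is precisely what has to be proved, and nothing in the proposal proves it.

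For comparison, the paper sidesteps spatial realisation entirely and uses a purely algebraic invariant: decomposability as a direct product of two non-trivial subgroups. For the end $\xi$ on the bi-infinite $(a,b)$-coloured geodesic $\gamma$, the equality $\St_{F'}(a)=\St_{F'}(b)$ propagates backwards along $\gamma$, so every element of $\St^0_{G(F,F')}(\xi)$ fixes all of $\gamma$, in particular an edge; Le Boudec's edge-independence property then splits $\St^0_{G(F,F')}(\xi)$ as a non-trivial direct product of the two half-tree rigid stabilisers. For the end $\eta$ on the $(a,c)$-coloured ray, any putative decomposition $H_1\times H_2$ has disjoint open support sets $U_1,U_2$ in $\partial T_d\setminus\{\eta\}$ by the double commutator lemma (the action being micro-supported); using $\St_{F'}(a)\neq\St_{F'}(c)$ one produces, for every $n$, an element of $\St^0_{G(F,F')}(\eta)$ fixing $V_{n+1}^{+}$ but moving the edge $e_n$, and whichever factor $h_i$ moves $e_n$ must move every end of $V_n^{-}$; since the $V_n^{-}$ are nested and exhaust $\partial T_d\setminus\{\eta\}$, one of the $U_i$ must be empty, so $\St^0_{G(F,F')}(\eta)$ admits no non-trivial splitting. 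Any completion of your approach would, in effect, have to reinvent an invariant of this kind; the Rubin-plus-germs route should be abandoned.
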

\begin{proof}
Let $v_0\in V(T_d)$ be some vertex of $T_d$, let $\gamma$ be the bi-infinite geodesic passing through $v_0$ and whose edges are coloured alternatingly by $a$ and $b$, and let $\xi\in \partial T_d$ be one of its two endpoints. Since $\St_{F'}(a)=\St_{F'}(b)$, one can see that any element fixing a neighbourhood of $\xi$ must fix every vertex of the geodesic $\gamma$. This implies that $\St_{G(F,F')}^{0}(\xi)$ fixes an edge, and thus it follows from the \emph{edge-independence property} (see \cite{LeBoudec16} Definition 4.1) that there exist two non-trivial subgroups $K_1,K_2\leq \St_{G(F,F')}^{0}$ such that $\St_{G(F,F')}^{0}=K_1\times K_2$.

Now, let $\gamma'$ be the unique right-infinite geodesic starting at $v_0$ whose edges are coloured alternatingly by $a$ and $c$, and let $\eta\in \partial T_d$ be its endpoint. Let $H_1, H_2\leq \St_{G(F,F')}^{0}(\eta)$ be such that $\St_{G(F,F')}^{0}(\eta)=H_1\times H_2$. We claim that either $H_1=1$ or $H_2=1$. For $i\in \{1,2\}$, let
\[U_i = \left\{\zeta \in \partial T_d \setminus \{\eta\} \mid \exists h\in H_i \text{ such that } h(\zeta)\ne \zeta\right\}\]
be the set of elements moved by $H_i$. These sets are open in $\partial T_d\setminus \{\eta\}$. Furthermore, as both $H_1$ and $H_2$ are normal in $\St_{G(F,F')}^{0}(\eta)$, it follows from the fact that the action of $G(F,F')$ on $\partial T_d$ is micro-supported and from the double commutator lemma (for a reference, see for example \cite{Nekrashevych18b} Lemma 4.1) that $U_1\cap U_2 = \emptyset$.

%To prove our claim, it suffices to show that either $U_1$ or $U_2$ is empty. Let us assume that they are not, and let us choose $\zeta_1\in U_1$, $\zeta_2\in U_2$. For $i\in \{1,2\}$, let $\pi_i$ be the unique bi-infinite geodesic with endpoints $\zeta_i$ and $\eta$. Let $\gamma''$ be the unique right-infinite geodesic whose vertices and edges are shared by $\pi_1$, $\pi_2$ and $\gamma'$, let $e_0$ be an edge of $\gamma''$ labelled by $c$, and let $v_1$ be the endpoint of $e_0$ that is closer to $\eta$ than $\zeta_1$ or $\zeta_2$. Since $\St_{F'}(a)\ne \St_{F'}(c)$, one can show that there exists $g\in \St_{G(F,F')}^{0}(\eta)$ fixing $v_1$ and a neighbourhood of $\eta$ but moving $e_0$. By our assumptions, there exist $h_1\in H_1$ and $h_2\in H_2$ such that $g=h_1h_2$. Since $H_1, H_2\leq \St_{G(F,F')}^{0}(\eta)$, there must exist some vertex $v_2$ in $\gamma''$ such that $h_1(v_2)=h_2(v_2)=v_2$, and since $h_1h_2(v_1)=v_1$, we conclude that $v_2$ must be closer to $\eta$ than $v_1$. Therefore, if $h_1(e_0)\ne e_0$, then $h_1(\zeta_2)\ne \zeta_2$, which is absurd, since $\zeta_2\in U_2$ and $h_1$ must fix every point of $U_2$. Likewise, we get a contradiction unless $h_2(e_0)=e_0$. However, this implies that $g(e_0)=e_0$, which is absurd. We conclude that one of $U_1$ or $U_2$ must be empty.

To prove our claim, it suffices to show that either $U_1$ or $U_2$ is empty. Let $e_1, e_2, e_3, \dots \in E(T_d)$ be the edges of $\gamma'$, and for every $n\in \NN\setminus\{0\}$, let $v_{n-1}\in V(T_d)$ be the origin of $e_n$. The edge $e_n$ splits the boundary $\partial T_d$ in two clopen subsets $V_n^{+}$ and $V_n^{-}$, where $V_n^{+}$ is the set of all (equivalence classes of) right-infinite geodesics starting at $v_{n-1}$ whose first edge is $e_n$, and $V_n^{-}$ is its complement. We have $V_n^{-}\subset V_{n+1}^{-}$ and $\bigcup_{n\in \NN}V_{n+1}^{-}=\partial T_d \setminus \{\eta\}$.

Let us fix some $n\in \NN\setminus\{0\}$. Using the fact that $\St_{F'}(a)\ne \St_{F'}(c)$ and that the colouring of the edges of $\gamma'$ alternates between $a$ and $c$, one can show that there exists $g\in G(F,F')$ such that $G$ fixes $V_{n+1}^{+}$ but $g(e_n)\ne e_n$. In particular, we have $g\in \St_{G(F,F')}^{0}(\eta)$, so by our assumptions, there exist $h_1\in H_1$ and $h_2\in H_2$ such that $g=h_1h_2$. At least one of $h_1$ or $h_2$ must move $e_n$, since $g$ does. If $h_1(e_n)\ne e_n$, then it follows from the fact that $h_1\in \St_{G(F,F')}^{0}(\eta)$, and therefore must fix some $v_m$ for $m\geq n$, that $h_1(\zeta)\ne \zeta$ for all $\zeta \in V_n^{-}$, and thus we have $V_n^{-}\subseteq U_1$. Likewise, if $h_2(e_n)\ne e_n$, then we have $V_n^{-}\subseteq U_2$.

From the fact that $U_1\cap U_2 = \emptyset$ and that $V_n^{-}\subset V_{n+1}^{-}$, we conclude from the above argument that without loss of generality (up to renumbering $H_1$ and $H_2$), we have $V_n^{-}\subseteq U_1$ for all $n\in \NN\setminus\{0\}$. Since $\bigcup_{n\in \NN}V_{n+1}^{-}=\partial T_d \setminus \{\eta\}$, this implies that $U_2=\emptyset$. It follows that $\St_{G(F,F')}^{0}(\eta)$ cannot be written as a non-trivial direct product, and therefore cannot be isomorphic to $\St_{G(F,F')}^{0}(\xi)$.

%Let us fix some $n\in \NN\setminus\{0\}$. Using the fact that $\St_{F'}(a)\ne \St_{F'}(c)$ and that the colouring of the edges of $\gamma'$ alternates between $a$ and $c$, one can show that there exists $g\in G(F,F')$ such that $G$ fixes $V_{n+1}^{+}$ but $g(e_n)\ne e_n$. This implies that $g(\zeta)\ne \zeta$ for all $\zeta\in V_{n}^{-1}$. Since $g$ fixes $V_{n+1}^{+}$, we have $g\in \St_{G(F,F')}^{0}(\eta)$, so by our assumptions, there exist $h_1\in H_1$ and $h_2\in H_2$ such that $g=h_1h_2$. As both $h_1$ and $h_2$ are in $\St_{G(F,F')}^{0}(\eta)$, there must exist some $m\in \NN\setminus \{0\}$ such that they both fix $V_{m}^{+}$, and therefore $v_{m-1}$.

\end{proof}

\bibliography{biblio}
\bibliographystyle{plain}

\end{document}